\def\hcorrection #1 {\advance \hoffset by #1}
\def\vcorrection #1 {\advance \voffset by #1}
\begin{document}
\baselineskip 0.22in

\newcommand{\ds}{\displaystyle}
\newcommand{\be}{\begin{equation}}
\newcommand{\ee}{\end{equation}}
\newcommand{\bd}{\begin{description}}
\newcommand{\ed}{\end{description}}
\newcommand{\ba}{\begin{array}}
\newcommand{\ea}{\end{array}}
\newcommand{\ul}{\underline}
\newcommand{\ol}{\overline}
\newcommand{\ld}{\lambda}
\newcommand{\Dt}{\mbox{$\Delta$}}
\newcommand{\dt}{\mbox{$\delta$}}
\newcommand{\Og}{\Omega}
\newcommand{\bm}{\boldmath}
\newcommand{\eps}{\varepsilon}
\newcommand{\ra}{\rightarrow}
\newcommand{\dr}{\mbox{$\downarrow$}}
\newcommand{\ur}{\mbox{$\uparrow$}}
\newcommand{\rhpu}{\mbox{$\rightharpoonup$}}
\newcommand{\rhpd}{\mbox{$\rightharpoondown$}}
\newcommand{\ti}{\tilde}
\newcommand{\bi}{\begin{itemize}}
\newcommand{\ei}{\end{itemize}}
\newcommand{\df}{\ds\frac}
\newcommand{\lra}{\longrightarrow}
\newcommand{\al}{\alpha}
\newcommand{\Lra}{\Longrightarrow}
\newcommand{\p}{\partial}
\newcommand{\bt}{\begin{tabular}}
\newcommand{\et}{\end{tabular}}
\renewcommand{\arraystretch}{1.2} 

\newcommand{\imp}{\Longleftrightarrow}

\newtheorem{thm}{Theorem}[section]
\newtheorem{corollary}{Corollary}[section]
\newtheorem{lemma}{Lemma}[section]
\newtheorem{proposition}{Proposition}[section]
\theoremstyle{remark}
\newtheorem{definition}{Definition}[section]
\newtheorem{remark}{Remark}[section]
\newtheorem{example}{Example}[section]
\newtheorem*{notation}{Notation}
\numberwithin{equation}{section}

\newcommand{\B}{\mathcal{B} }
\newcommand{\bC}{{\mathbb C}}
\newcommand{\bZ}{{\mathbb Z}}
\newcommand{\bQ}{{\mathbb Q}}
\newcommand{\bR}{{\mathbb R}}
\newcommand{\bN}{{\mathbb N}}
\newcommand{\bT}{{\mathbb T}}
\newcommand{\bU}{{\mathbb U}} 
\newcommand{\bV}{{\mathbb V}}
\newcommand{\End}{\operatorname{End}}
\newcommand{\tr}{\operatorname{tr}}

\title{Lattice aggregations of boxes and symmetric functions}

\author{Natasha Rozhkovskaya}
\address{Department of Mathematics, Kansas State University, Manhattan, KS 66502, USA}
\email{rozhkovs@math.ksu.edu}

\thanks{}

\begin{abstract}
We introduce two lattice  growth models:   aggregation of $l$-dimensional boxes and  aggregation of  partitions with  $l$ parts. 
We describe  properties of the models:    the parameter set of aggregations,  the  moments of the random variable 
of the number of  growth directions,   asymptotical behavior  of  proportions of the most frequent transitions of   two- and three-dimensional 
self-aggregations.
\end{abstract}

\maketitle

\section{Introduction}

Growth processes, and, in particular, aggregations  of   randomly moving    particles and clusters
 are studied by  a wide variety of  theoretical models,    experiments, and numerical   simulations (see \cite{DF90, M98, M99, San2000,  SO86, WS81}  for examples related to the discussion here). Advances in computer  technology  and  fractal geometry  allowed  researchers to describe  many important  growth   parameters,  such as fractal dimension,  shapes, kinetics of formation.  These studies show that  with all the   simplicity of the growth rules,  the   result is   ``a devilishly difficult model to solve, even approximately'', as it is  emotionally expressed in  \cite{San2000}.

For this reason,    computational and experimental methods  remain as  the  main  investigation instruments 
of aggregation processes, while theoretical studies are restricted to  significant simplifications and assumptions on the  parameters. Yet, simplified analytical models   give valuable  insight  into  core characteristics of   growth process.

In this note we  introduce and study analytically a simplified growth model that is inspired  by   diffusion limited  cluster aggregation (DLCA) models. In such models, once two randomly moving  in a medium clusters attach to each other, they form   a new cluster  that  continues to move   and eventually finds other   clusters to  aggregate \cite{San2000,WS81}.

In our simplified model  the  $n$-dimensional clusters  are represented  by $n$-dimensional rectangular  boxes.  Substitution of an object by an axis-aligned minimal bounding box is commonly  used in approximation theory,  since it  leads to a less expensive, quick  and sufficient  evaluation of desired properties of the object. 
Two boxes    produce  on attachment   a new bigger  box by the procedure  described in Section \ref{Sec1}. 
The focus  of this study is on the local statistical  properties of  the  aggregates shapes, while  the fractal nature of aggregates is ``forgotten" in this model. This is not a conventional  approach to the DLCA investigation, since, as it is pointed out in \cite{San2000}, the  large-scale  fractal structures of a  cluster is dominated by non-local effects. Yet, we advocate here for the study of local growth  properties, since, as we will see below, they provide interesting  combinatorics.

 The paper is organized  as follows.
 A particular inspiration for  our considerations comes from the observations  in \cite{Sor1, Sor2}, where 
DLCA numerical simulations    describe the shapes of aggregates by inscribing them in the minimal  bounding boxes and measuring the ratios of the side lengths of these boxes. In Section \ref{Sec6} we recall the  interesting phenomena   on the proportions of  clusters that  was observed in these papers.

In Section \ref{Sec1} from the  definition of aggregation of boxes 
   we  deduce the  basic properties of the process and describe the parameter set of possible attachments.

 In  Section \ref{Sec2}  we describe  the random variable of the  number of directions in which an $n$-dimensional box grows under aggregation. We compute  probabilities and the generating function of the moments in terms of elementary symmetric functions.  In Section \ref{Sec3} we apply the results  of the previous section to the  aggregation of  a box with a  ``particle'' represented by a unit box.
  
  In Section \ref{Sec4} we introduce the second model of  aggregations of boxes up to equivalence of rotations, which   defines  families of transitional  probabilities  on   the set of partitions of $l$ parts.

   In Section \ref{Sec5} we come back to the observations outlined in Section \ref{Sec6} and describe the process of self-aggregation of partitions of  the highest transitional probabilities.  It turns out, that in two-dimensional case  such transitions  stabilize  to a Fibonacci type  sequence, which immediately implies the asymptotical limit  on the ratios of side lengths of the boxes. In three-dimensional case  a stabilizing sequence does not exist in general.

\subsection{Acknowledgement}  The author is very grateful to Prof.\,C.\,M.\,Sorensen for sharing   valuable insights  and providing  detailed  explanations on DLCA and RHA models. She would like to thank   Prof.\,A.\,Chakrabarti  for helpful discussions on the project. She also thanks   Jasmine Hunt,  for her  undergraduate research project  findings that made essential   contribution to    calculations  of Table  \ref{table:3}, and to  Abhinav Chand and James  Hyun, for writing a code that the author used to check her calculations in Table 10 and Figures 10-12. The research is supported by the the Simons Foundation Travel Support for Mathematicians grant MP-TSM-00002544.

\section{Motivations:  interpretations  of peak values  DLCA and RHM proportion distributions   }\label{Sec6}
  From the vast variety of  DLA  and DLCA  studies, we would like to  recall  the observations in  \cite{Sor1, Sor2} that   served as a special  motivation for introduced  here   models, and, in particular,  for the   study of    self-aggregations with the highest transitional probabilities  in Section \ref{Sec5}.

In
\cite{Sor1}    two computer  simulation models are investigated: the  diffusion limited cluster-cluster aggregation model (DLCA), and the  restricted hierarchical model  (RHM).
The later  one admits more theoretical  analysis  and  is used to  support an interesting interpretation of the maximal value point  of certain probability distributions of  the first model. 
Both models  measure   proportions of  rectangular boxes circumscribed about aggregated clusters.

The DLCA simulation starts with    $10^6$  monomers  placed  in a box.  At  each step, after  the number of clusters is counted, a random cluster is chosen and moved in a random direction at a distance of one monomer diameter. Probability of this movement  depends on the number of clusters and  the size of the chosen cluster. On the collision,  clusters irreversibly stick together, and the number of clusters goes down by one.
In addition to calculation of  fractal dimensions, the authors  
provide   distributions   of  shapes of  the $l$-dimensional   aggregates  ($l=2,3$) by comparing  their ``lengths''  in $l$ perpendicular directions. 

In  two-dimensional case, let  $L_1\ge L_2$ be the side lengths of a minimal bounding box of  a cluster. In  \cite{Sor1} 
  the distribution  of ratios  $L_1/L_2$ for the set of   the results of the DLCA simulation is computed. The computations show  that the  maximal value  of this distribution is achieved at   $L_1/L_2=1.63\pm 0.34$. Similarly, in  three-dimensional case,
let  $L_1\ge L_2\ge L_3$ be the side lengths of the  minimal bounding box of a cluster. According to \cite{Sor1},    distributions of ratios  $L_i/L_j$  ($i<j$) peaked at the values $L_1/L_2 =1.46\pm 0.27$,  $L_2/L_3 =1.35\pm 0.24$  and $L_1/L_3 =2.14\pm 0.27$. 

The authors suggest a very  interesting interpretation of  these experimental  values  as approximations of  irrational  constants,  $\varphi$,  $\psi$ and $\psi^2$,  where   $\varphi\simeq1.618$. is a root of $\varphi^2=\varphi+1$, 
and $\psi\simeq 1.465$ is a cubic root of $\psi^3=\psi^2+1$. This is a remarkable suggestion, since a natural presence of an  irrational number would indicate that the peak value point of the distribution  depends not so much  on a physical input, but  rather on mathematical properties of the model. 
  In  the support of this conjecture,  a  restricted hierarchical model (RHM)  was studied  in  \cite{Sor2}. This model is  more analytical and has   a significantly smaller number of possible aggregation results. It is based on the aggregation of cloned clusters  under the  ``side-to-end' condition: the longest side of the original cluster should be  necessarily linked with the shortest  side of its clone.
The side-to-end aggregations similarly appear as   T-model in  \cite{WB89}, where aggregating clusters are modeled by ellipses always sticking together  to create a T-shape, along with  the averaged model, where   non-zero probabilities  are limited only  to three types of  attachments.
In Section \ref{Sec5} we relate   interpretations  of the peak value  points of distributions of  proportions of clusters \cite{Sor1, Sor2} to  the most frequent  transitions of   self-aggregations of partitions.

\section{Aggregations of boxes: definitions and properties }\label{Sec1}
\subsection{Two growth models}

 \begin{figure}[h!]
\includegraphics [width=7cm] {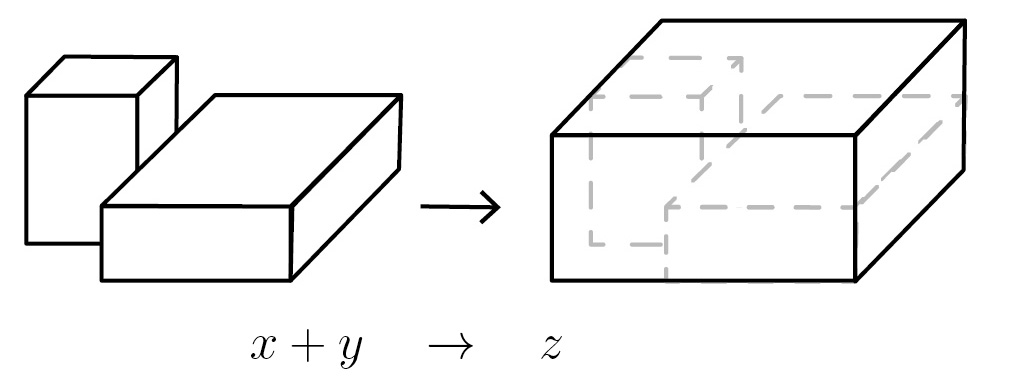}  
  \caption{Aggregation  of three-dimensional boxes. }
  \label{Fig2}
\end{figure}

In the growth models  of this note  an  $l$-dimensional  cluster   is represented by an $l$-dimensional  rectangular  box those movements are aligned with an integer lattice $\bZ^l$. Two boxes attach  to  each other along the lattice   to produce a new box (Figure \ref{Fig2}). For the purpose of  counting, there are two possible  variations (Figure \ref{Fig1}): 
\begin{enumerate}
\item 
 Different orientations of  a box are declared to  represent different clusters.  In this case each cluster  is identified with an  array of the side lengths of the box $x=(x_1,\dots, x_l)\in \bZ^l_{>0}$ written in a particular order. We refer  to this   growth model as  {\it aggregation of boxes}.

  \item 
  Alternatively, one can identify all rotations of a box to be  the same cluster. As it is explained in Section \ref{Sec4}, in this case  each cluster is identified with a partition $\lambda=(\lambda_1 \ge \dots \ge \lambda_l>0)$, $\lambda_i\in \bZ_{>0}$. This model is called    {\it aggregation of partitions}.
 
 \end{enumerate} 
 \begin{figure}[h!]
\includegraphics [width=10cm] {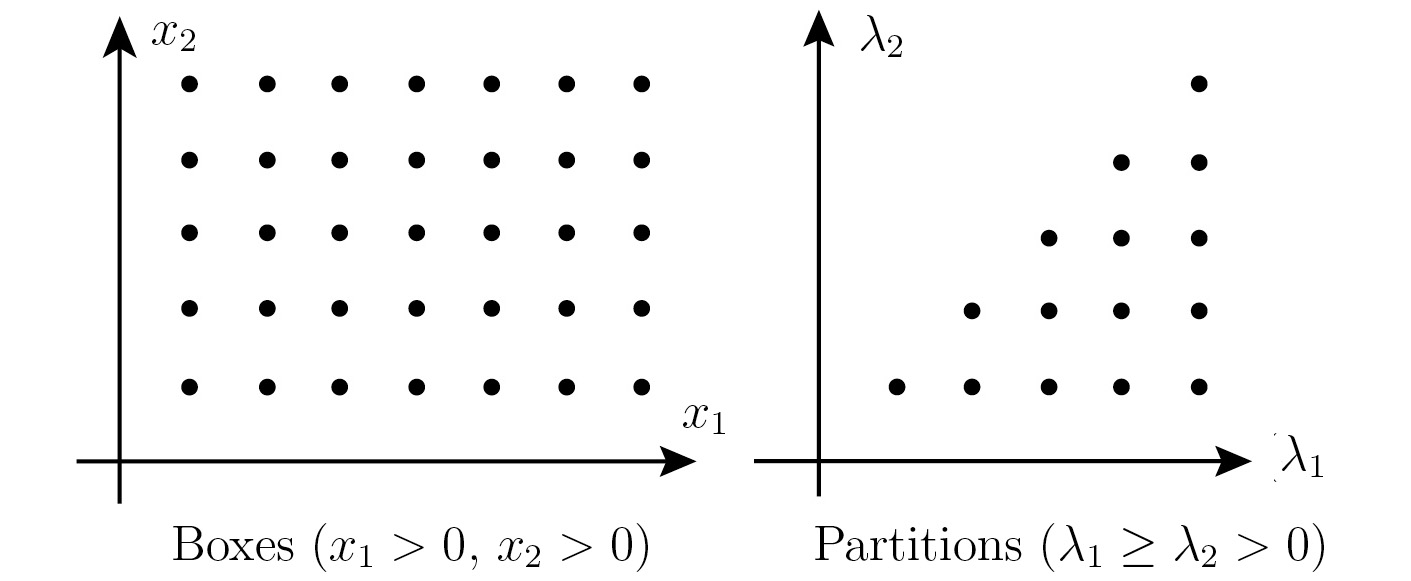}  \quad \quad 
  \caption{Enumeration of boxes and partitions in two-dimensional models.}
  \label{Fig1}
\end{figure}

In many applications  the assumption that  all rotations  represent  the same  shape  seem to be natural.  Combinatorics of  aggregations of partitions is more involved than combinatorics of aggregation of boxes,  but still it is   based on counting  of   contributing aggregations of  rotated boxes. Hence, we aim to study both  aggregation models.  
  Our goal is to describe  properties  of these two models and to compare    with     studies of  DLA  models in   \cite{Sor1}, \cite{Sor2}.

\subsection{Aggregation of boxes}

We fix  an  integer lattice $\bZ^l\subset \bR^l$ aligned with a   Cartesian coordinate  system. 
 Let the origin of the coordinate system 
coincide with  one of the vertices of the lattice.    
\begin{definition}
An { \it  $l$-dimensional  box}   $x$  is  a Cartesian product  of $l$  parallel  to  the  coordinate axes intervals with the integer  {lengths} represented by the array $  x=( x_1,\dots, x_l)
\in \bZ_{>0}^l$. Below we always assume that the endpoints of these intervals are  placed at the vertices of  the lattice.
\end{definition}   
Thus, we identify an $l$-dimensional  box with a point of  $\bZ_{>0}^l$.
For example, a rectangle of size $3$ by $5$ corresponds to  a point   $(3,5)$, and 
a rectangle of size $5$ by $3$  is a different point  with coordinates $(5,3)$.

Let 
 $x$  and  $y$ be  two $l$-dimensional boxes. The    box $y$  is translated without rotations, moving around  $x$, and  producing  a collection of   new boxes   by the following  attachment procedure (Figure \ref{Fig2}). 
\begin{itemize}
\item Boxes  $x$  and  $y$  are placed next to each other, so that  all  their sides are aligned with   the coordinate  axes of the $l$-dimensional space, and   vertices  of both boxes are  situated on  the lattice  $\bZ^l$.
\item  The boxes must be in the attachment: their boundaries must  contain  at least   one  common lattice  point. 
\item  The  union of the  two  attached boxes $x$ and $y$ is inscribed in a  minimal  bounding  box $z$ aligned with the  coordinate axes, as in Figure \ref{Fig2}.

\end{itemize}
\begin{definition}
We say that the box  $ z$ is
{\it a  result of  aggregation of two boxes $x$ and $y$},   and write `$x +  y\to z
$'.
\end{definition}
Various attachments of two boxes produce  a number of possible results, and the sizes of  resulting boxes  may  repeat for different attachments. Assuming that all attachments are equally likely, 
we introduce  the probability distribution  on all possible results   $P^{z}_{xy}=P(x+y\to z)$   as the number of occurrences of a  box  $ z$,  divided by the total number of ways for $x$ and $y$  to aggregate. 

\begin{example} \label{13+12}

There are 14  possible attachments of    $x=(1,3)$ and $y=(1,2)$,  and there are 14  possible attachments of     $x=(1,3)$ and $y=(2,1)$.  
The  probability distributions 
of aggregation results  are given in Tables  \ref{table:1}, \ref{table:2}.

\begin{table}[h!]
\centering

\begin{tabular}{ @{}cccccc @{} }
  \hline
 $z$& $(2,5)$& $(2,4 )$&$(2,3)$& $(1,5)$
 &otherwise\\
\hline
$P^{z}_{(1,3), (1,2)}$& $\frac{2}{7}$&  $\frac{2}{7}$ & $\frac{2}{7}$ & $\frac{1}{7}$& $0$\\
\hline 
 \end{tabular}
 
\caption{Results of aggregation $(1,3)+(1,2)$}
\label{table:1}
\end{table}

\begin{table}[h!]
\centering
\begin{tabular}{ @{}ccccc @{}  } 
  \hline
 $z $& $(3,4)$& $(3,3 )$&$(2,4)$ &otherwise\\
\hline
$P^z_{(1,3),(2,1)}$&  $\frac{2}{7}$ & $\frac{3}{7}$ &  $\frac{2}{7}$ & 0\\
\hline 
 \end{tabular}

\caption{Results of aggregation $(1,3)+(2,1)$}
\label{table:2}
\end{table}

\end{example}

\subsection{ Two-dimensional case: aggregation of rectangles} 
It is not difficult to compute   frequencies  of results of   aggregation of any  $2$-dimensional boxes   $x=(x_1,x_2)$ and $y=(y_1,y_2)$.
 The total number of possible attachments of these rectangles is 
 $|T|=2(x_1+x_2+ y_1+y_2)$ (see Proposition \ref{prop_parameter} for general formula). Probability distribution $P^z_{x,y}$ is provided in Table  \ref{table:3}.

\begin{table}[h!]
\centering

\begin{tabular}{@{}lc @{} } 
 \hline
 Result of aggregation  $ z=(z_1, z_2)$&Probability $P^z_{x,y}$   \\ 
\hline 
&\\
\includegraphics [height=1.05cm] {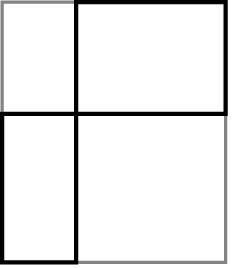}  \quad 
$(x_1+y_1, x_2+y_2),  $   &  $\frac{4}{|T|}$\\
\includegraphics [height=1.05cm]  {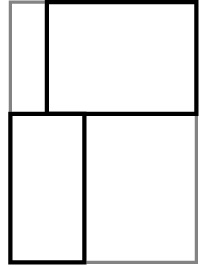}  \quad 
$(x_1+y_1-s, y_2+x_2), \quad s=1,\dots, \min(x_1, y_1)-1 $,   &  $\frac{4}{|T|}$\\
\includegraphics  [width=0.9cm] {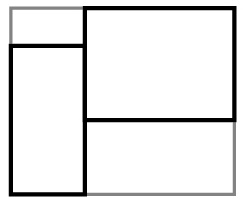}  \quad 
$ (x_1+y_1, x_2+y_2-s), \quad s=1,\dots, \min(x_2, y_2)-1$, &  $\frac{4}{|T|}$\\
\includegraphics  [width=0.9cm] {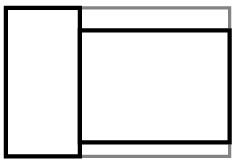}  \quad 
$(x_1+y_1, \max (x_2,y_2)) $  & $ \frac{2 |y_2-x_2|+2}{|T|}$ \\
\includegraphics  [height=1.05cm] {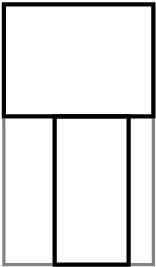}  \quad 
$( \max (x_1,y_1)\,, x_2+y_2) $  &  $ \frac{2|y_1-x_1|+2}{|T|}$ \\
\quad \quad \quad 
otherwise &0\\
\hline
  \end{tabular}
\caption{Probability distribution of aggregation of rectangles.}
\label{table:3}
\end{table}

\subsection{Basic properties of aggregation of boxes }

The following properties of probability distributions immediately follow from the definition. 
 \begin{proposition}\label{basic_prop}
Let  $x= (x_1,\dots, x_i, \dots, x_l)$ and  $y=(y_1,\dots, y_i, \dots y_l )$ be two $l$-dimensional boxes. 
\begin{enumerate}
\item 
$\sum_{z\in \bZ^l_{>0}}P^{z}_{x,y}=1$.
\item  $P^{z}_{x,y}=P^{z}_{y,x}$ {for any $ z\in \bZ^l_{>0}$.}
\item Let 
$x^\prime=(x_1,\dots, y_i, \dots, x_l)$ and  $ y^\prime= (y_1,\dots, x_i, \dots y_l )$  be  a pair of boxes  obtained  from the original pair    $x$ and $y$ by the
swap  of sides $x_i \leftrightarrow y_i$. Then both pairs of aggregating boxes  have the same sets of results  with the same probabilities: 
 \begin{align*}
P^{z}_{x,y}=P^{z}_{x^\prime ,y^\prime}
 \quad \text{for any $ z\in \bZ^l_{>0}$.}
 \end{align*}
 \item $P^{z}_{xy}\ne 0$ if  and only if $z=(z_1,\dots, z_l)$ is  within the range  $ max(x_i, y_i)\le z_i\le  x_i+ y_i $ for all $i\in \{1,\dots, l\}$.

 \end{enumerate}
\end{proposition}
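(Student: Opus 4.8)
Throughout I read an \emph{attachment} of $x$ and $y$ as in the definition — boxes ``placed next to each other'' meeting in a common boundary lattice point, as Example~\ref{13+12} and Table~\ref{table:3} show — i.e.\ a placement with box $X$ at $\prod_j I_j$, box $Y$ at $\prod_j J_j$, $|I_j|=x_j$, $|J_j|=y_j$, such that $I_j\cap J_j\ne\emptyset$ for every $j$ while $\mathrm{int}\,I_j\cap\mathrm{int}\,J_j=\emptyset$ for at least one $j$. The first condition confines the relative offset to a bounded set, so the set $T_{x,y}$ of attachments (taken up to a common translation) is finite; let $T_{x,y,z}$ be those with result $z$, so $P^z_{x,y}=|T_{x,y,z}|/|T_{x,y}|$. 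Then (1) is immediate, since $\{T_{x,y,z}\}_z$ partitions $T_{x,y}$. For (2), interchanging the two boxes (equivalently, negating the offset) is a result-preserving bijection $T_{x,y}\to T_{y,x}$ — it fixes the union of the boxes, hence its bounding box $z$, and preserves both defining conditions — so $P^z_{x,y}=P^z_{y,x}$.

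For (3), fix $i$ and define $\Phi\colon T_{x,y}\to T_{x',y'}$ by \emph{transposing the $i$-th coordinate slots}: the attachment with $X$ at $\prod_j I_j$, $Y$ at $\prod_j J_j$ goes to the configuration with $X$ at $\big(\prod_{j\ne i}I_j\big)\times J_i$ and $Y$ at $\big(\prod_{j\ne i}J_j\big)\times I_i$, whose side arrays are exactly $x'$ and $y'$. For each $j$ the $j$-th coordinate projection of the new union equals that of the old one (for $j=i$ since $J_i\cup I_i=I_i\cup J_i$), so the bounding box and hence $z$ are unchanged; likewise the set of coordinates with disjoint open projections, and the property that closed projections meet in every coordinate, are unchanged, so the new configuration is again an attachment. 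As $\Phi$ is visibly an involution it is a bijection with $\Phi(T_{x,y,z})=T_{x',y',z}$, and therefore $P^z_{x,y}=|T_{x,y,z}|/|T_{x,y}|=|T_{x',y',z}|/|T_{x',y'}|=P^z_{x',y'}$.

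I expect (3) to be the only real obstacle, and the delicate point in it is verifying that \emph{validity} of an attachment is governed entirely by conditions imposed coordinate by coordinate and symmetric under swapping the two boxes' data within a single coordinate — exactly where the ``non-overlapping, adjacent'' reading of the definition is used (under a looser convention $\Phi$ need not land in $T_{x',y'}$). Everything else is bookkeeping.

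Finally (4). For the forward implication, take an attachment realizing $z$: the bounding interval of $I_j$ and $J_j$ contains each of them, so $z_j\ge\max(x_j,y_j)$, and since $I_j\cap J_j\ne\emptyset$ it has length at most $|I_j|+|J_j|$, so $z_j\le x_j+y_j$. (At a coordinate $i_0$ where the open projections are disjoint the closed ones meet in a single point, forcing $z_{i_0}=x_{i_0}+y_{i_0}$; this condition is also necessary — e.g.\ $P^{(2,3)}_{(1,3),(2,1)}=0$ by Table~\ref{table:2} — and I would include it in the stated range.) Conversely, given $z$ with $\max(x_j,y_j)\le z_j\le x_j+y_j$ for all $j$ and $z_{i_0}=x_{i_0}+y_{i_0}$ for some $i_0$, set $I_{i_0}=[0,x_{i_0}]$, $J_{i_0}=[x_{i_0},x_{i_0}+y_{i_0}]$ and, for $j\ne i_0$, $I_j=[0,x_j]$, $J_j=[z_j-y_j,z_j]$; then $I_j\cap J_j\ne\emptyset$ for all $j$, $\mathrm{int}\,I_{i_0}\cap\mathrm{int}\,J_{i_0}=\emptyset$, and the bounding box of $\prod_jI_j\cup\prod_jJ_j$ is $\prod_j[0,z_j]$, so this is an attachment producing $z$ and $P^z_{x,y}\ne0$.
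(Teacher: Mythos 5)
Your proof is correct and complete. The paper offers no argument for this proposition---it asserts that all four properties ``immediately follow from the definition''---so the only thing to check your write-up against is the definition itself and the worked examples, and your reading of ``attachment'' (closed coordinate projections meet for every $j$, open projections disjoint for at least one $j$) is exactly the one encoded by the parametrization of attachments by boundary lattice points of $R_{x+y}$ in Proposition~\ref{prop_parameter}. Parts (1) and (2) are indeed bookkeeping. For (3), the coordinate-transposition bijection $\Phi$ is the right mechanism, and the one point that genuinely needs saying is the one you isolate: validity of an attachment and the resulting bounding box depend only on the unordered pair $\{I_j,J_j\}$ in each coordinate, so swapping the $i$-th slots between the two boxes changes neither.

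More importantly, you are right that part (4) fails in the ``if'' direction as literally stated: membership in $\prod_i[\max(x_i,y_i),\,x_i+y_i]$ is necessary but not sufficient, because every attachment has some coordinate $i_0$ whose projections touch without overlapping, which forces $z_{i_0}=x_{i_0}+y_{i_0}$. Your counterexample $z=(2,3)$ for $(1,3)+(2,1)$ is confirmed by the paper's own Table~\ref{table:2}, and Table~\ref{table:3} shows the same pattern in general: every result with nonzero probability has $z_1=x_1+y_1$ or $z_2=x_2+y_2$. With the extra condition ``$z_i=x_i+y_i$ for at least one $i$'' adjoined, your explicit construction in the converse direction establishes the corrected equivalence. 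This is a correction the statement should adopt.
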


 \subsection{Parameter set of aggregations}
In this section we describe the parameter set of possible attachments  of two $l$-dimensional boxes. 
This allows us to to compute the total number of aggregations for two boxes and compute some interesting probabilities.

\begin{proposition}\label{prop_parameter}

\begin{enumerate} 
Let $ x= (x_1,\dots, x_l)$ and   $ y= (y_1,\dots, y_l)$
 be two $l$-dimensional boxes. 

\item 
Let $R_{x+y}$ be an  $l$-dimensional box of size $(x_1+y_1)\times \dots \times  (x_l+y_l)$ fixed in the position  with one vertex in the origin: 
\[
R_{x+y}=\{(s_1,\dots s_l)| 0\le s_i\le x_i+y_i\}.
\]

The set of possible  attachments  of  boxes $x$ and $y$ in the process of their aggregation is  in one-to-one correspondence  with  the set  $ T$
 of all   integer points of the boundary  of $R_{x+y}$:
\[T=\bZ^l\cap \partial R_{x+y}\] 

\item 
Thus, number $|T|= |\{z| x+y\to z\}| $ of all  possible results of aggregations   $x+y\to z$ counted with multiplicities is given by 
\[
|T|= \prod_{i=1}^{l}(x_i+y_i+1)- \prod_{i=1}^{l}(x_i+y_i-1).
\]

\item \label{prop_p3}
Let $s=(s_1, s_2,\dots, s_l)\in T$ be one of the points of the parameter set producing  the result of aggregation
 $z(s)$. Then 
  $z(s)=(z_{x_1, y_1} (s_1),\dots z_{x_l, y_l} (s_l))$, where 
\begin{align}\label{eq_zs}
z_{x_i,y_i} (s_i)=\max(x_i,y_i,x_i+y_i-s_i,s_i), 
\end{align}
illustrated in Figure \ref{Fig3}.

 \begin{figure}[h!]
\includegraphics [width=7cm] {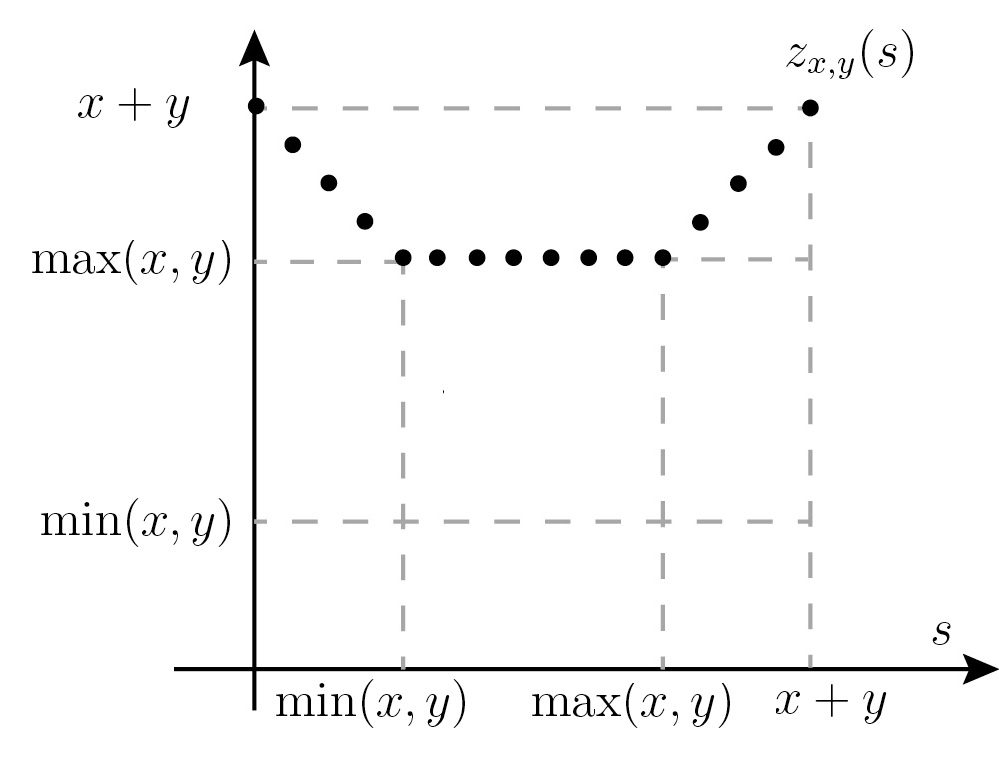}  
  \caption{Graph of $z_{xy}(s)$ of side lengths of results of aggregations.  }
  \label{Fig3}
\end{figure}

\end{enumerate}
\end{proposition}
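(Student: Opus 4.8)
The plan is to coordinatize the set of attachments by a single integer offset vector and to read off all three claims from it. First I would fix the box $y$ once and for all in the standard position $[0,y_1]\times\dots\times[0,y_l]$. Up to the global translation that the model allows, every attachment is obtained by translating $x$ to a position $x(w)=[w_1,w_1+x_1]\times\dots\times[w_l,w_l+x_l]$ with $w\in\bZ^l$, and I would record this position by the vector $s=(s_1,\dots,s_l)$ with $s_i=w_i+x_i$ (the ``far'' vertex of $x(w)$). Then $w\mapsto s$ is a bijection from lattice placements of $x$ onto $\bZ^l$, so the whole task is (i) to single out the $s$ that give a genuine attachment, and (ii) to compute the minimal bounding box of $x(w)\cup y$ in terms of $s$.

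For step (i) I would work coordinate by coordinate. The projections of the two boxes onto the $i$-th axis are $[0,y_i]$ and $[s_i-x_i,s_i]$; these closed intervals meet iff $0\le s_i\le x_i+y_i$, and since two axis-aligned boxes have intersecting closures exactly when their closed projections meet in every coordinate, the closures of $x(w)$ and $y$ meet iff $s\in R_{x+y}$. Likewise the open intervals $(0,y_i)$ and $(s_i-x_i,s_i)$ are disjoint iff $s_i\le 0$ or $s_i\ge x_i+y_i$, and the two open boxes are disjoint iff this happens for at least one $i$; together with $s\in R_{x+y}$ this is exactly $s\in\partial R_{x+y}$. Finally, when $s\in\bZ^l\cap\partial R_{x+y}$ the intersection of the two closed boxes is $\prod_i[\max(0,s_i-x_i),\min(y_i,s_i)]$, which is nonempty with integer endpoints, hence contains a lattice point $p$; picking a coordinate $j$ with $s_j\in\{0,x_j+y_j\}$ one checks that $p_j$ is forced to an endpoint of both $j$-th projections, so $p$ lies on the boundary of $x(w)$ and of $y$ simultaneously. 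Conversely, a common lattice point of the two boundaries already forces the closures to meet, hence $s\in R_{x+y}$, and disjointness of interiors then forces $s\in\partial R_{x+y}$. This would give the bijection of (1) between attachments and $T=\bZ^l\cap\partial R_{x+y}$.

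Parts (2) and (3) would then be short. For (2), $|T|$ is the number of lattice points of $R_{x+y}$ minus the number of lattice points of its open interior, namely $\prod_i(x_i+y_i+1)-\prod_i(x_i+y_i-1)$ (the second product being correct since $x_i+y_i\ge 2$). For (3), the projection of $x(w)\cup y$ onto the $i$-th axis is $[\min(0,s_i-x_i),\max(y_i,s_i)]$, so the $i$-th side of the minimal axis-aligned bounding box is $z_i=\max(y_i,s_i)-\min(0,s_i-x_i)$; a short case split on the sign of $s_i-x_i$, and in the subcase $s_i\le x_i$ on the sign of $s_i-y_i$, collapses this to $\max(x_i,y_i,x_i+y_i-s_i,s_i)$, which is \eqref{eq_zs}. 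I expect the only real obstacle to be the bookkeeping in step (i): confirming that $s\mapsto$ placement is genuinely a bijection (it is, because $y$ is pinned) and, more importantly, that the model's attachment requirement --- disjoint interiors together with a shared \emph{boundary} lattice point --- is exactly equivalent to the clean condition $s\in\partial R_{x+y}$, neither stronger nor weaker; once that equivalence is pinned down the rest is routine.
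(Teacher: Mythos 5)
Your proposal is correct and follows essentially the same route as the paper: parametrize each attachment by the position of a marked vertex of the translated box, identify the admissible positions with $\bZ^l\cap\partial R_{x+y}$, count boundary lattice points by inclusion--exclusion, and compute the bounding box coordinatewise (your formula $z_i=\max(y_i,s_i)-\min(0,s_i-x_i)$ collapses to \eqref{eq_zs} exactly as the paper's piecewise-linear description does). The only difference is one of rigor: you verify explicitly that the attachment condition (disjoint interiors plus a common boundary lattice point) is equivalent to $s\in\partial R_{x+y}$, a step the paper treats as evident.
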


\begin{proof}\begin{enumerate}
\item \label{proof32_1}
Recall that aggregations  $x+y\to z$ are produced by moving $y$ around $x$ in an  attachment. Each position of $y$ defines an aggregation result $z$, and at the same time, each position of  $y$ is completely defined by a position of any vertex of $y$. Mark a  vertex of $y$ and follow its movement  during  the  aggregation procedure. It is clear that  the position of this vertex   runs  through all
integer  boundary  points of  an $(x_1+y_1)\times \dots \times  (x_l+y_l)$ box.  Thus the set of attachments    of  $x$ and $y$  is  parametrized by  integer boundary points of such  box. We denote  this parameter box as $R_{x+y}$ and place  it  in the origin for  the convenience of  formulas and proofs  below.
 \item 
 The number $|T|$  of  the integer boundary  points     equals the difference between the   number of all  integer points   of   this  box
  $ \prod_{i=1}^{l}(x_i+y_i+1)$ and 
 the number of  interior integer points  of this box  $\prod_{i=1}^{l}(x_i+y_i-1)$. 

 \item 
 Let $x_i$ and $y_i $ be the $i$-th  sizes  of aggregating boxes.  From Proposition \ref{basic_prop} we can assume that $y_i\ge x_i$.
 Let   $A$ be  a marked vertex of the box as in the proof  of (\ref{proof32_1}). Possible values of coordinates $(s_1, \dots, s_l)$  of $A$ parametrize all possible attachments of $x$ and $y$.
 Since $y$ moves along each non-attaching coordinate independently, it is sufficient to prove (3) just  for the projection on one coordinate, see Figure 4.
As the box $y$ moves,  the coordinate $s_i$ runs through the values between $0$ and $x_i+y_i$. The corresponding size  $z_i(s_i)$ of the aggregation  result starts with the maximal possible value $x_i+y_i$ then  linearly drops down to the value  of $y_i$, remains on the same level until $s_i=y_i$, after that it again grows linearly until reaches the value 
  $z_i(x_i+y_i)= x_i+y_i$. The resulting  function for $z_i(s_i)$ is exactly of the form (\ref{eq_zs}).

 \begin{figure}[h!]
\includegraphics [width=8cm] {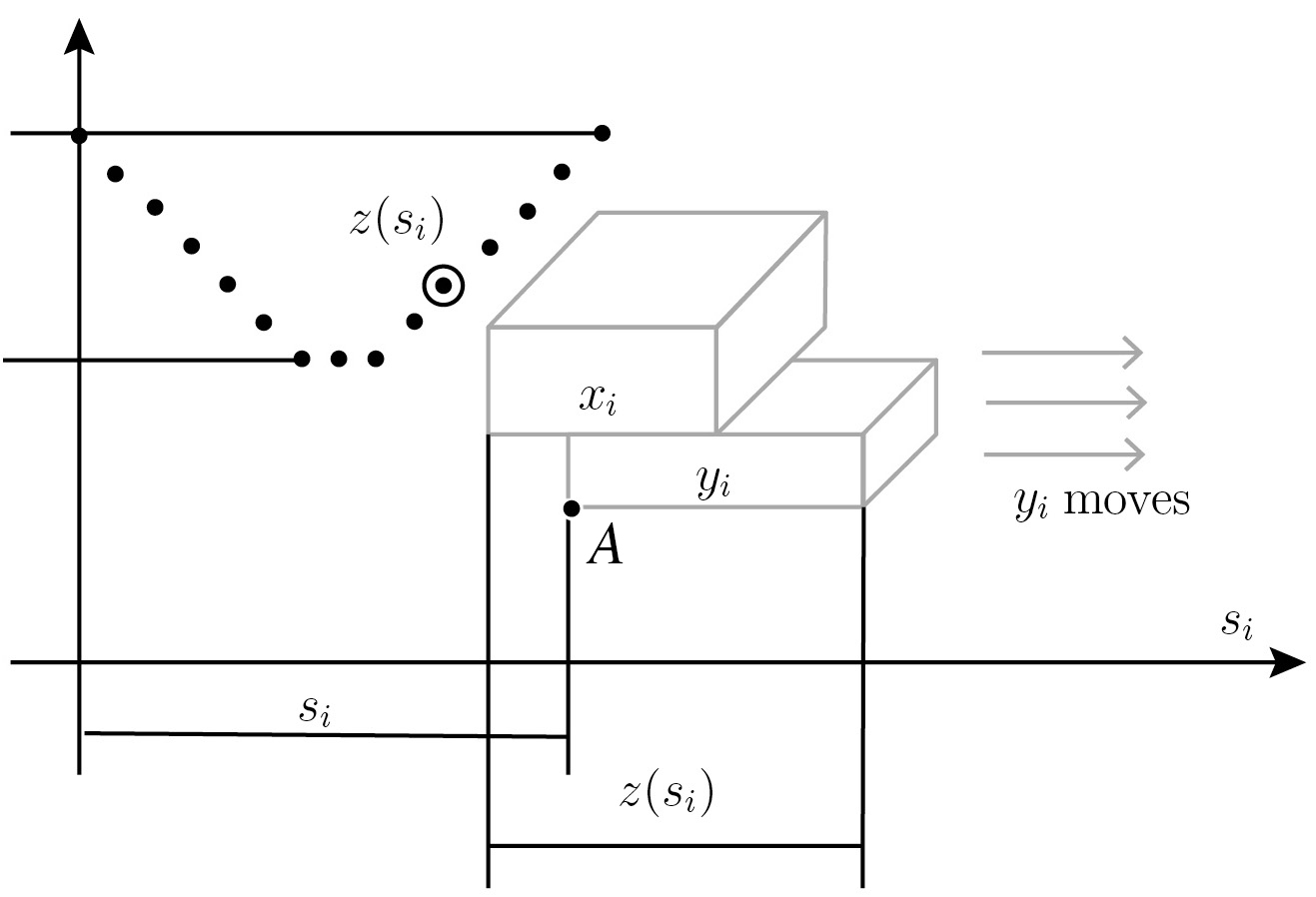}  
  \caption{ Varying  $s_i$ by moving  edge $y_i$ along  the edge $x_i$ and  measuring  the combined  size $z(s_i)$.   }
  \label{Fig4}
\end{figure}

\end{enumerate}
\end{proof}

\begin{example}
Figure \ref{Fig5} gives an example of the parameter set for the aggregations  $(1,1,4)+(5,2,3)$.
 \begin{figure}[h!]
\includegraphics [width=6cm] {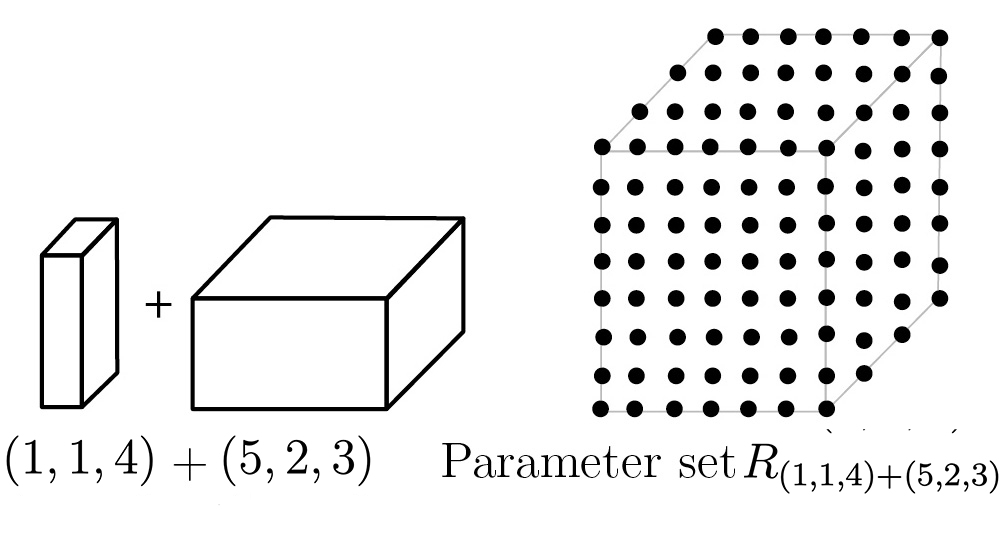}  
  \caption{Possible  attachments  of  $(1,1,4)$ and $(5,2,3)$ is parametrized by 
 integer points on the boundary of a box $R_{(1,1,4)+(5,2,3)}=(6,3,7)$. }
  \label{Fig5}
\end{figure}

\end{example}
\begin{example}
If $l=2$, then  the total number of aggregations with multiplicities is 
\[ |T|=2(x_1+x_2+ y_1+y_2)\]
If
$l=3$, then 
\[
|T|= 2+ 2(x_1+y_1)(x_2+y_2)+ 2(x_1+y_1)(x_3+y_3) +2(x_2+y_2)(x_3+y_3).
\] 
\end{example}

\section{The number of growth directions}\label{Sec2}
\subsection{Growth  of a box  in $k$ particular  directions}
 For the simplicity of notations, by Proposition \ref{basic_prop}, we  assume in this section that all $x_i \ge y_i$, $(i=1,\dots, l)$. Then  a side length $z_i$ of  a result of aggregation 
$x+y\to z$   either stays the same  $z_i = x_i$ or  grows $ z_i= x_i+t_i$,\, $0<t_i\le y_i$.  Our goal is to describe the random variable that  assigns  to each result of aggregation $z$ the number of sides that have  changed their lengths from the side  lengths of $x$.

\begin{proposition}
Let $z$ be a result of aggregation of $x+y\to z$,  where $x_i\ge y_i$ for all $i=1,\dots, l$. Let $ 1\le i_1<\dots <i_k\le l$, $1\le k\le l$.  Denote as  $P(i_1, \dots i_k)$ the probability that 
$z$  grows from $x$ in the directions $ \{i_1,\dots, i_k\}$:
\[
P(i_1, \dots, i_k)= P( z_j> x_j \,  \text{if and only if  }\, j\in \{i_1,\dots, i_k\}).
\]
Then
\begin{align}\label{prop_Pk}
P(i_1, \dots, i_k)=\frac{2^k}{|T|} \left(\prod_{j\in \{i_1,\dots, i_k\}} y_j- \prod_{j\in \{i_1,\dots, i_k\}} (y_j-1)\right)
\prod_{j\notin \{i_1,\dots, i_k\}}(x_j-y_j+1).
\end{align}
\end{proposition}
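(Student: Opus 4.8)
The plan is to work directly with the parametrization of attachments by the boundary lattice set $T=\bZ^l\cap\partial R_{x+y}$ furnished by Proposition \ref{prop_parameter}, together with the explicit formula $z_i(s_i)=z_{x_i,y_i}(s_i)=\max(x_i,y_i,x_i+y_i-s_i,s_i)$ for the $i$-th side length of the result. Since we assume $x_i\ge y_i$, this reduces to $z_i(s_i)=\max(x_i,\,x_i+y_i-s_i,\,s_i)$, and one checks that $z_i(s_i)=x_i$ exactly when $y_i\le s_i\le x_i$, while $z_i(s_i)>x_i$ exactly when $s_i\in\{0,\dots,y_i-1\}\cup\{x_i+1,\dots,x_i+y_i\}$. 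So for each coordinate $i$ I would introduce the ``growth values'' $G_i=\{s_i:z_i(s_i)>x_i\}$ and the ``stable values'' $S_i=\{s_i:z_i(s_i)=x_i\}$, recording $|G_i|=2y_i$, $|S_i|=x_i-y_i+1$, and $G_i\sqcup S_i=\{0,1,\dots,x_i+y_i\}$ (here $|S_i|\ge1$ automatically, since $x_i\ge y_i$).

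The event that $z$ grows from $x$ in exactly the directions $K:=\{i_1,\dots,i_k\}$ then corresponds to the set of $s\in T$ with $s_i\in G_i$ for $i\in K$ and $s_i\in S_i$ for $i\notin K$. The main obstacle is that $T$ is the \emph{boundary} of the box $R_{x+y}$, hence not a product set, so the cardinality cannot be obtained by multiplying coordinatewise. The standard remedy is the identity ``(lattice points on the boundary) $=$ (lattice points of the closed box) $-$ (lattice points of the open interior)'', because both the full grid $\prod_i\{0,\dots,x_i+y_i\}$ and the interior grid $\prod_i\{1,\dots,x_i+y_i-1\}$ \emph{are} product sets. Counting the prescribed pattern ($G_i$ for $i\in K$, $S_i$ otherwise) inside the full grid gives $\prod_{i\in K}|G_i|\cdot\prod_{i\notin K}|S_i|=2^k\prod_{i\in K}y_i\cdot\prod_{i\notin K}(x_i-y_i+1)$.

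For the interior count I would verify that passing to $\{1,\dots,x_i+y_i-1\}$ removes precisely the two extreme values $0$ and $x_i+y_i$ from $G_i$ (both of which are growth values, as $z_i(0)=z_i(x_i+y_i)=x_i+y_i>x_i$), whence $|G_i\cap\{1,\dots,x_i+y_i-1\}|=2(y_i-1)$, while $S_i$ is untouched since $y_i\ge1$ forces $0,x_i+y_i\notin S_i$, so $|S_i\cap\{1,\dots,x_i+y_i-1\}|=x_i-y_i+1$; the degenerate case $y_i=1$, where $G_i\cap(\text{interior})=\varnothing$, is consistent with $2(y_i-1)=0$. Therefore the number of $s\in T$ realizing the pattern is
\[
2^k\prod_{i\in K}y_i\cdot\prod_{i\notin K}(x_i-y_i+1)\;-\;2^k\prod_{i\in K}(y_i-1)\cdot\prod_{i\notin K}(x_i-y_i+1)
=2^k\Bigl(\prod_{i\in K}y_i-\prod_{i\in K}(y_i-1)\Bigr)\prod_{i\notin K}(x_i-y_i+1),
\]
and dividing by $|T|$, the total number of attachments, yields formula \eqref{prop_Pk}. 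Beyond the boundary-versus-interior bookkeeping, the one thing to be careful with is the treatment of small parameter values ($y_i=1$, or $x_i=y_i$) so that every cardinality stays nonnegative and the formula holds with no exceptional cases.
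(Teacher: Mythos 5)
Your argument is correct and is essentially the paper's proof: both parametrize attachments by the lattice points of $\partial R_{x+y}$, identify the growth condition coordinatewise via $z_{x_i,y_i}(s_i)$, and obtain the boundary count as a difference of two product-set counts, yielding the factor $\prod y_j-\prod(y_j-1)$. The only organizational difference is that the paper first splits each growth set into its two intervals and counts one of the resulting $2^k$ orthants using the ``at least one $s_j=0$'' condition before multiplying by $2^k$, whereas you apply the closed-box-minus-open-interior identity globally to $T$; the arithmetic is identical.
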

\begin{proof}
Without loss of generality we can assume that  $ \{i_1,\dots, i_k\}= \{1,\dots, k\}$. 
Each result  $z(s)$ that is different from $x$ in the first $k$ places is generated by  attachments that correspond to  parameters   $s_j\in [0, y_j-1]\cup[x_j+1, x_j+y_j]$ for $j=1,\dots, k$
and $s_j\in [y_j, x_j]$ for $j=k+1, \dots, l$, under the  condition that there is an  attachment  in  at least  one coordinate:  for at  least one $j\in\{1,\dots k\}$,  $s_j=0$ or $s_j=x_j+y_j$.

Assume for a moment  that for all  $j\in\{1,\dots, k\}$,    $s_j\in [0, y_j-1]$.   Using  the condition that  at least one of $s_j=0$  among these $j\in\{1,\dots k\}$, and  that 
$s_j\in [y_j, x_j]$ for $j=k+1, \dots, l$, we conclude that 
this set of parameters   is identified with a set of  integer points of  ``the exterior"  part of the boundary  the  $l$-dimensional box $[0, y_1-1]\times \dots \times [0, y_k-1]\times [y_{k+1},x_{k+1}]\dots \times [y_{l},x_{l}]$. Figure 6 illustrates this statement in three-dimensional case. The  integer points of the boundary of the  box $[0, x_1+y_1]\times[0, x_2+y_2]\times [0, x_3+y_3]$
represent the set $T$ of all parameters of attachments. The shaded parts of the boundary represent parameters $s$ that correspond to:
\begin{itemize}
\item    $k=3$, $0\le s_j<y_j$ for $j\in\{1,2,3\}$, and either $s_1=0$, or $s_2=0$, or $s_3=0$.
\item   $k=2$, $0\le s_j<y_j$ for $j\in\{1,2\}$, and either $s_1=0$, or $s_2=0$, and $y_3\le s_3\le x_3$.
\item   $k=1$, $s_1=0$, and   for $j\in\{2,3\}$ $y_j\le s_j\le x_j$.
\end{itemize}

 \begin{figure}[h!]
\includegraphics [width=11cm] {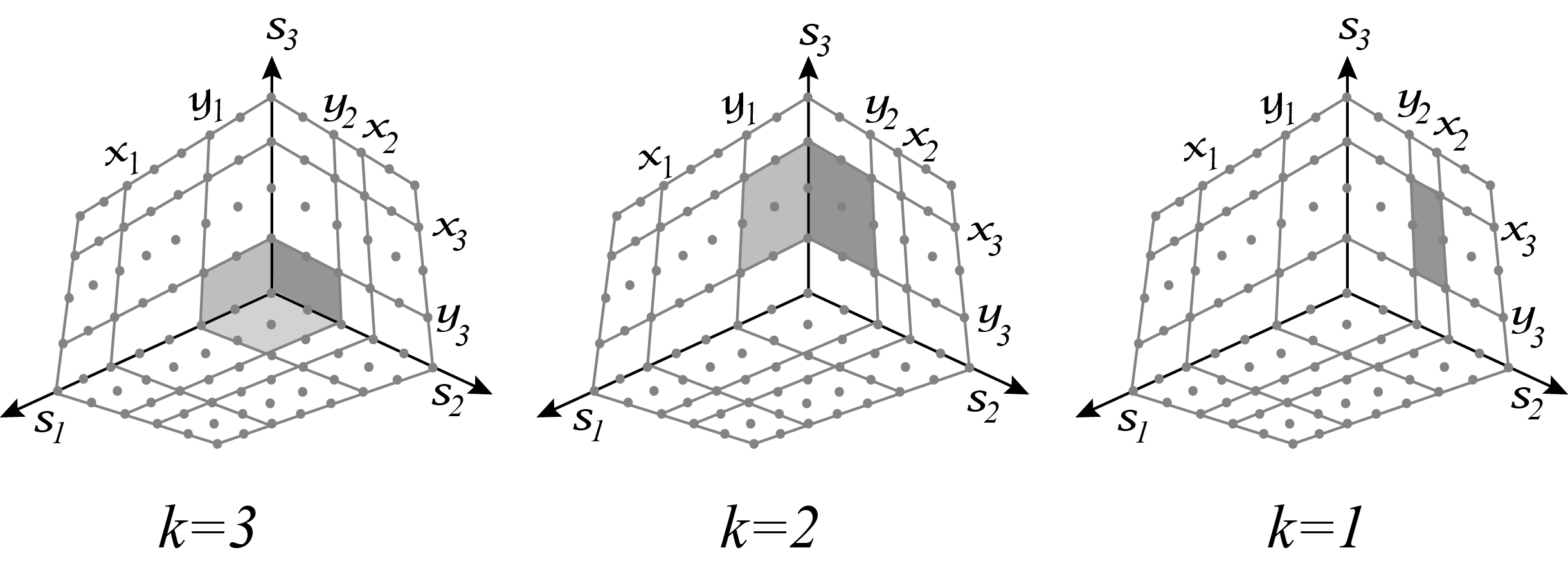}  
  \caption{ Parameters for changes in $k=3,2,1$ directions. }
  \label{Fig6}
\end{figure}

The  cardinality  of the described subset of parameters  is given by the difference of the number of integer points in the box
$[0, y_1-1]\times \dots \times [0, y_k-1]\times [y_{k+1},x_{k+1}]\dots \times [y_{l},x_{l}]$ and the box $[1, y_1-1]\times \dots \times [1, y_k-1]\times [y_{k+1},x_{k+1}]\dots \times [y_{l},x_{l}]$, which is 
\[
\prod_{j=k+1}^{l} (x_j-y_j+1)\left(\prod_{j=1}^ky_j-\prod_{j=1}^k(y_j-1) \right).
\]
Allowing each $s_j$, $j=1,\dots, k$  to vary through  two options of intervals,  $[0, y_j-1]$, or $[x_j+1, x_j+y_j]$, we get
$2^k$ copies  of the same subset to conclude that
\begin{align*}
P(1, \dots, k)=\frac{2^k}{|T|} \prod_{j=k+1}^{l} (x_j-y_j+1)\left(\prod_{j=1}^ky_j-\prod_{j=1}^k(y_j-1) \right).
\end{align*}
Statement (\ref{prop_Pk}) follows  by permuting the indices $\{1, \dots, k \}$ to get $\{i_1, \dots, i_k \}$.

\end{proof}
\subsection{Elementary symmetric functions}
Further combinatorics of this random variable is effectively described in the language of  symmetric polynomials \cite{Mac}.
\begin{definition} For $k=1,2,\dots $ the 
 {\it elementary symmetric}  polynomial $E_k (a_1, \dots,  a_n)$ in  $n$ variables $(a_1, \dots,  a_n)$, $n\ge k$,  is defined by
\[
E_k (a_1, \dots,  a_n)=\sum_{1\le i_1<\dots <i_k\le n} a_{i_1}\dots a_{i_k},
\]
and  $E_0=1$.
\end{definition}A well-known identity relates elementary symmetric functions and expansion of a polynomial with roots $a_1,\dots, a_n$:
\[
\prod _{{j=1}}^{n}(u -a_{j})=\sum_{k=0}^n (-1)^{n-k} E_{n-k}(a_1,\dots, a_n) u^k.
\]

For $1\le i_1< \dots <i_k\le l$, $1\le k\le l$, $ a, b\in \bR^l$, let  
\begin{align*}
r_{i_1,\dots, i_k}(a,b)=\prod_{j\notin \{i_1,\dots, i_k\}} a_j \prod_{j\in \{i_1,\dots, i_k\}} b_j,
\end{align*}
\[
R_k(a,b)= \sum_{1\le i_1<\dots <i_k\le l} r_{i_1,\dots, i_k}(a,b),
\quad 
 \mathcal R(a,b)[u]= \sum_{k=1}^{l}u^kR_k(a,b).
\]

\begin{lemma}\label{MC-1}
\begin{enumerate}
\item 
\begin{align*}
 \mathcal R(a,b)[u]= \prod_{i=1}^l(a_i+ub_i)
\end{align*}
\item $R_k(a,b)$ can be expressed through elementary symmetric functions: 
\begin{align*} 
R_k(a,b)= b_1\dots b_l E_{l-k}\left( \frac{a_1}{b_1},\dots, \frac{a_l}{b_l}\right)= a_1\dots a_l E_{k}\left(\frac{b_1}{a_1},\dots, \frac{b_l}{a_l}\right).
\end{align*}
\end{enumerate}
\end{lemma}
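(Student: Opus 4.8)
The plan is to prove part (1) first by a direct expansion, and then deduce part (2) by matching it against the elementary-symmetric-function expansion of a product. For part (1), I would expand the product $\prod_{i=1}^{l}(a_i+ub_i)$ by the distributive law: each factor contributes either $a_i$ or $ub_i$, so a term in the expansion is indexed by the subset $S=\{i_1,\dots,i_k\}\subseteq\{1,\dots,l\}$ of indices for which we pick the $ub_i$ option. That term is $u^{|S|}\prod_{j\notin S}a_j\prod_{j\in S}b_j = u^k\, r_{i_1,\dots,i_k}(a,b)$, in the notation defined just above the lemma. Summing over all subsets of each fixed size $k$ gives $u^k R_k(a,b)$, and summing over $k=0,1,\dots,l$ gives $\sum_{k=0}^{l}u^k R_k(a,b)$. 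The $k=0$ term is $\prod_j a_j$, while $\mathcal R(a,b)[u]$ as defined starts the sum at $k=1$; so strictly the identity should be read with $R_0(a,b)=a_1\cdots a_l$ absorbed appropriately (or one notes $\mathcal R(a,b)[u]=\prod_i(a_i+ub_i)$ once $R_0$ is included in the summation range, which is the intended reading). This is the only place where a small bookkeeping subtlety arises; otherwise the expansion is immediate.

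For part (2), I would compare the expansion from part (1) with the known expansion of a polynomial in terms of its roots quoted in the excerpt. Factor out $b_1\cdots b_l$ from $\prod_{i=1}^{l}(a_i+ub_i)$ to get
\begin{align*}
\prod_{i=1}^{l}(a_i+ub_i)=b_1\cdots b_l\prod_{i=1}^{l}\left(u+\frac{a_i}{b_i}\right).
\end{align*}
Applying the cited identity $\prod_{j=1}^{n}(u-a_j)=\sum_{k=0}^{n}(-1)^{n-k}E_{n-k}(a_1,\dots,a_n)u^k$ with $n=l$ and with roots $-a_i/b_i$, and using $E_{l-k}(-a_1/b_1,\dots,-a_l/b_l)=(-1)^{l-k}E_{l-k}(a_1/b_1,\dots,a_l/b_l)$, the signs cancel and one obtains
\begin{align*}
\prod_{i=1}^{l}\left(u+\frac{a_i}{b_i}\right)=\sum_{k=0}^{l}E_{l-k}\!\left(\frac{a_1}{b_1},\dots,\frac{a_l}{b_l}\right)u^k.
\end{align*}
Hence $b_1\cdots b_l\sum_k E_{l-k}(a_\bullet/b_\bullet)u^k=\sum_k R_k(a,b)u^k$, and equating coefficients of $u^k$ gives the first equality $R_k(a,b)=b_1\cdots b_l\,E_{l-k}(a_1/b_1,\dots,a_l/b_l)$. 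The second equality follows by the symmetry $a\leftrightarrow b$, $k\leftrightarrow l-k$: from the definition $r_{i_1,\dots,i_k}(a,b)$, swapping the roles of $a$ and $b$ and complementing the index set $S\mapsto\{1,\dots,l\}\setminus S$ shows $R_k(a,b)=R_{l-k}(b,a)$, and then applying the first identity with $(a,b)$ replaced by $(b,a)$ and $k$ by $l-k$ yields $R_k(a,b)=a_1\cdots a_l\,E_k(b_1/a_1,\dots,b_l/a_l)$. Alternatively one can factor out $a_1\cdots a_l$ instead of $b_1\cdots b_l$ at the start and repeat the same computation.

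I do not expect a genuine obstacle here: this is a standard generating-function identity for elementary symmetric polynomials, and every step is a short manipulation. The only point requiring a moment of care is the index-range discrepancy between $\mathcal R(a,b)[u]$ (sum from $k=1$) and the full expansion (which naturally includes the $k=0$, i.e.\ $R_0=a_1\cdots a_l$, term); I would handle this by explicitly stating that $R_0(a,b):=a_1\cdots a_l$ so that $\prod_{i=1}^{l}(a_i+ub_i)=\sum_{k=0}^{l}u^k R_k(a,b)$, and then part (2) covers $k=0$ as well via $E_l(a_\bullet/b_\bullet)=\prod_j(a_j/b_j)$. After that, both formulas in part (2) are just reindexed readings of the same coefficient identity.
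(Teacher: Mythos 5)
Your proof is correct, and it rests on the same underlying decomposition as the paper's: indexing the terms of the expansion of $\prod_{i=1}^l(a_i+ub_i)$ by the subsets $\{i_1,\dots,i_k\}$ on which the $b$-factor is chosen. The only real difference is presentational. The paper first reduces to $u=1$ via $u^kR_k(a,b)=R_k(a,ub)$ and then proves $\sum_{k}\sum_{i_1<\dots<i_k} r_{i_1,\dots,i_k}(a,b)=\prod_i(a_i+b_i)$ geometrically, by cutting a box of size $(a_1+b_1)\times\dots\times(a_l+b_l)$ into $2^l$ sub-boxes whose volumes are exactly the $r_{i_1,\dots,i_k}(a,b)$; you obtain the identical subset-indexed sum directly from the distributive law, which is the algebraic shadow of the same picture. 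Two points where your write-up is actually more complete than the paper's: the paper's proof environment contains no argument for part (2) at all (it is treated as immediate from the definition of $E_k$), whereas you derive both equalities, either from the quoted root-expansion identity or, more simply, by the observation $R_k(a,b)=R_{l-k}(b,a)$ under complementation of the index set; and you correctly flag that the definition $\mathcal R(a,b)[u]=\sum_{k=1}^{l}u^kR_k(a,b)$ omits the $k=0$ term $R_0(a,b)=a_1\cdots a_l$ that the product $\prod_i(a_i+ub_i)$ contains --- the paper's own proof silently switches to $\sum_{k=0}^{l}$, so your convention $R_0(a,b):=a_1\cdots a_l$ is the intended reading (and is harmless downstream, since in Proposition \ref{moments} the $k=0$ contributions either vanish under $(u\partial_u)^p$ or cancel in the difference defining $|T|$).
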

\begin{proof}
\begin{enumerate}
\item 
Observe that $u^kr_{i_1,\dots, i_k}(a,b)=r_{i_1,\dots, i_k}(a,ub)$, hence $u^kR_k(a,b)=R_k(a,ub)$, 
and it is enough to show that 
\begin{align*}
 \mathcal R(a,b)[1]=
\sum_{k=0}^{l} \sum_{1\le i_1<\dots <i_k\le l}  r_{i_1,\dots, i_k}(a,b) =\prod_{i=1}^l(a_i+b_i).
\end{align*}

Consider an $l$-dimensional box of sizes $(a_1+b_1,\dots,a_l+b_l)$. Clearly, its volume is $\prod_{i=1}^l(a_i+b_i)$. 
Divide each edge  of this box into  two parts,  of lengths $a_i$ and $b_i$, and cut the box into smaller boxes, as illustrated in the Figure \ref{Fig7} for $l=3$.
Then the total volume is   the sum of volumes of smaller boxes, which are  exactly of the form  $r_{i_1,\dots, i_k}(a,b)$.
 \begin{figure}[h!]
\includegraphics [width=3cm] {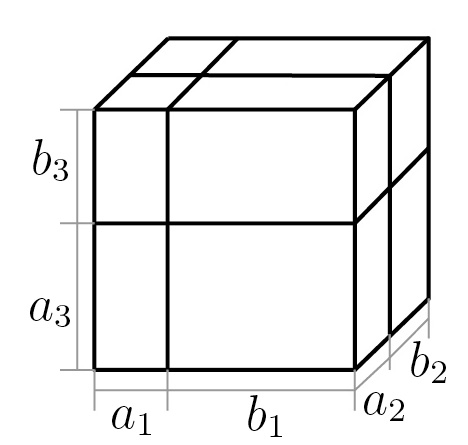}  
  \caption{ Cutting the  box by division of edges   into  sum of lengths $a_i$ and $b_i$, }
  \label{Fig7}
\end{figure}

\end{enumerate}
\end{proof}

\subsection{Random variable of the number of  directions growth}
Through this section  we again assume that  $x_i\ge y_i$ for all $i=1,\dots, l$.
Let  $X$   be  the random variable that assigns to a result of aggregation  $x+y\to z$,  the  number  of side lengths  of $z$  that are different from $x$. Then
\[
P(X= k)=\sum_{1\le i_1<\dots <i_k\le l} P(i_1,\dots, i_k),\]
 where 
in the introduced above notations 
\[
P(i_1, \dots, i_k)= \frac{2^k}{|T|} \left( r_{i_1,\dots, i_k}(x-y+1^l,y)- r_{i_1,\dots, i_k}(x-y+1^l, y-1^l)
\right)
\]
with $1^l= (1,\dots, 1) \in \bZ^l$.
Then  we get
\begin{proposition}
Let $x_i\ge y_i$ for all $i=1,\dots, l$. Then the probability that a result of aggregation  of $x$ and $y$  is  longer  than original box  $x$ in $k$ directions is given by 
\begin{align*}
P(X= k)
&=   \frac{2^k}{|T|} \left( R_k(x-y+1^l,y)- R_k(x-y+1^l, y-1^l)\right)
\\
&= \frac{2^k\prod_{i=1}^l(x_i-y_i+1)}{|T|} \left( E_k\left(\frac{y}{x-y+1^l}\right)- 
E_k\left(\frac{y-1^l}{x-y+1^l}\right)
\right),
\end{align*}
where we used short notations 
\[E_k\left(\frac{y}{x-y+1^l}\right)=E_k\left(\frac{y_1}{x_1-y_1+1},\dots ,\frac{y_l}{x_l-y_l+1}\right)\]
 and 
\[E_k\left(\frac{y-1^l}{x-y+1^l}\right)=
E_k\left(\frac{y_1-1}{x_1-y_1+1},\dots ,\frac{y_l-1}{x_l-y_l+1}\right).\]
\end{proposition}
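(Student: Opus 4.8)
The statement is essentially a repackaging of the previously established formula for $P(i_1,\dots,i_k)$, so the plan is to assemble the pieces already in hand rather than to develop anything new. First I would write out
\[
P(X=k)=\sum_{1\le i_1<\dots<i_k\le l}P(i_1,\dots,i_k),
\]
which is just the definition of $X$ together with the observation that the events ``$z$ grows exactly in the directions $\{i_1,\dots,i_k\}$'' are disjoint across all $k$-subsets of $\{1,\dots,l\}$ and exhaust the event $\{X=k\}$. Then I would substitute the formula
\[
P(i_1,\dots,i_k)=\frac{2^k}{|T|}\Bigl(r_{i_1,\dots,i_k}(x-y+1^l,y)-r_{i_1,\dots,i_k}(x-y+1^l,y-1^l)\Bigr)
\]
stated just before the proposition, pull the common factor $2^k/|T|$ out of the sum, and recognize the two resulting sums as $R_k(x-y+1^l,y)$ and $R_k(x-y+1^l,y-1^l)$ by the very definition $R_k(a,b)=\sum_{i_1<\dots<i_k}r_{i_1,\dots,i_k}(a,b)$. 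This gives the first displayed equality.

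For the second equality I would invoke Lemma \ref{MC-1}(2) with $a=x-y+1^l$ and $b=y$ (respectively $b=y-1^l$). In the form $R_k(a,b)=a_1\cdots a_l\,E_k(b_1/a_1,\dots,b_l/a_l)$ this gives
\[
R_k(x-y+1^l,y)=\Bigl(\prod_{i=1}^l(x_i-y_i+1)\Bigr)E_k\!\left(\frac{y}{x-y+1^l}\right),
\]
and similarly with $y-1^l$ in place of $y$. The common product $\prod_i(x_i-y_i+1)$ factors out of the difference, yielding the second line. One small point worth checking is that $x_i-y_i+1$ is a genuine positive integer (it is, since $x_i\ge y_i$ by the standing assumption of this section, so $x_i-y_i+1\ge 1$), so dividing by it to form the arguments of $E_k$ is legitimate.

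There is essentially no obstacle here: the proposition is a corollary of the preceding formula for $P(i_1,\dots,i_k)$ and of Lemma \ref{MC-1}. The only thing that requires a word of care is bookkeeping — making sure the combinatorial identity for $P(i_1,\dots,i_k)$ is applied with the correct arguments and that the factor-out of $2^k/|T|$ and of $\prod_i(x_i-y_i+1)$ is done consistently on both terms of the difference. I would therefore keep the proof short: one line for the disjoint decomposition, one line to recognize $R_k$, one line to apply Lemma \ref{MC-1}(2), with a parenthetical remark that $x_i-y_i+1>0$.
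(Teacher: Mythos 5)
Your proposal is correct and follows exactly the route the paper intends: the paper states this proposition without a separate proof, presenting it as an immediate consequence of the decomposition $P(X=k)=\sum_{i_1<\dots<i_k}P(i_1,\dots,i_k)$, the rewriting of $P(i_1,\dots,i_k)$ via $r_{i_1,\dots,i_k}$, the definition of $R_k$, and Lemma \ref{MC-1}(2) with $a=x-y+1^l$. Your bookkeeping (including the check that $x_i-y_i+1\ge 1$) is accurate and nothing is missing.
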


The moments of the  random variable $X$ are defined in a standard way: 
\[
M^p=\sum_{k=1}^{l}  k^p \,P( X= k).
\]
 \begin{proposition}\label{moments}
   The moments $M^p$ of  $X$ can be computed by the formula
\begin{align*}
M^p&= \frac{ ( u\,\partial_u )^p\mathcal M(u))}  {\mathcal M(u) } |_{u=2},
\end{align*}
where 
\begin{align}\label{M(u)}
\mathcal M(u)
&= \prod_{i=1}^{l }\left(1+ \frac{y_iu}{x_i-y_i+1}\right) - \prod_{i=1}^{l }\left(1+ \frac{(y_i-1)u}{x_i-y_i+1}\right).
\end{align}

\end{proposition}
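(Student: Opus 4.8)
The plan is to identify $\mathcal{M}(u)$ as essentially the generating function $\sum_k P(X=k)\,u^k$ up to the normalization factor $|T|$, and then obtain the moments by the standard device that applying the operator $u\,\partial_u$ to a polynomial $\sum_k c_k u^k$ brings down a factor of $k$ in each term. First I would recall from the previous proposition that
\begin{align*}
P(X=k) = \frac{2^k\prod_{i=1}^l(x_i-y_i+1)}{|T|}\left(E_k\!\left(\tfrac{y}{x-y+1^l}\right) - E_k\!\left(\tfrac{y-1^l}{x-y+1^l}\right)\right),
\end{align*}
so that, multiplying by $u^k$ and summing over $k=1,\dots,l$, the two sums of the form $\sum_k E_k(\cdot)\,(2u)^k$ are generating functions of elementary symmetric polynomials. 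By the well-known identity $\sum_{k=0}^n t^k E_k(a_1,\dots,a_n) = \prod_{i=1}^n(1+t a_i)$ (equivalently Lemma \ref{MC-1}(1) with the substitution $t = u$), each sum collapses to a product; with $t = 2u$ and $a_i = y_i/(x_i-y_i+1)$ (respectively $a_i = (y_i-1)/(x_i-y_i+1)$) the factor $\prod_i(x_i-y_i+1)$ cancels the denominators, yielding exactly
\begin{align*}
\sum_{k=1}^l P(X=k)\,u^k = \frac{1}{|T|}\left(\prod_{i=1}^l\left(1+\frac{y_i(2u)}{x_i-y_i+1}\right) - \prod_{i=1}^l\left(1+\frac{(y_i-1)(2u)}{x_i-y_i+1}\right)\right),
\end{align*}
which is $\frac{1}{|T|}\mathcal{M}(2u)$. (I should double-check the placement of the factor $2$: the cleanest bookkeeping is to write $\mathcal{M}(u)$ as stated, observe $\sum_k P(X=k)u^k = \mathcal{M}(2u)/|T|$, and carry the rescaling through to the evaluation point.)

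Next I would handle the normalization. Setting $u=1$ in $\sum_k P(X=k)u^k$ must give $\sum_k P(X=k) = 1$ by Proposition \ref{basic_prop}(1) (since $X$ takes values in $\{1,\dots,l\}$ — at least one coordinate must grow, as the two boxes are attached), which forces $\mathcal{M}(2) = |T|$. Alternatively this can be verified directly: $\mathcal{M}(2) = \prod_i\bigl(\frac{x_i+y_i+1}{x_i-y_i+1}\bigr)\prod_i(x_i-y_i+1) \cdot(\text{first product}) - \dots$; more simply, clearing denominators in $\mathcal{M}(2)\prod_i(x_i-y_i+1)^{-1}$ is not needed — one just notes $\mathcal{M}(2) = \prod_i(x_i+y_i+1) - \prod_i(x_i+y_i-1)$, which is precisely the formula for $|T|$ from Proposition \ref{prop_parameter}(2). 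This gives the denominator $\mathcal{M}(u)|_{u=2}$ in the claimed formula.

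Finally, for the moments: write $F(u) = \sum_{k=1}^l P(X=k)\,u^k = \mathcal{M}(2u)/|T|$. Since $(u\,\partial_u)$ acts on the monomial $u^k$ as multiplication by $k$, iterating gives $(u\,\partial_u)^p F(u) = \sum_k k^p P(X=k)\,u^k$, and evaluating at $u=1$ yields $M^p = (u\,\partial_u)^p F(u)\big|_{u=1}$. Now substitute $v = 2u$, so that $u\,\partial_u = v\,\partial_v$ (the Euler operator is invariant under scaling of the variable), and $u=1$ corresponds to $v=2$; hence $M^p = (v\,\partial_v)^p \mathcal{M}(v)\big|_{v=2}/|T| = (v\,\partial_v)^p\mathcal{M}(v)\big|_{v=2} \,/\, \mathcal{M}(v)\big|_{v=2}$, which is the stated formula after renaming $v$ back to $u$. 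The main obstacle is purely bookkeeping: keeping the factor of $2$ consistent between the definition of $\mathcal{M}(u)$ (where it is absent, the argument being $u$) and the generating function (where the argument is $2u$), and making sure the evaluation point lands at $u=2$ rather than $u=1$; the scale-invariance of $u\,\partial_u$ is exactly what makes this transparent. No genuinely hard estimate or combinatorial identity is required beyond Lemma \ref{MC-1} and Proposition \ref{prop_parameter}, both already available.
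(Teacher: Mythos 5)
Your proposal follows essentially the same route as the paper: identify the $u$-generating function of the $P(X=k)$ with $\mathcal M$ via Lemma \ref{MC-1}, use the Euler operator $u\,\partial_u$ to bring down factors of $k^p$, and normalize by $|T|$ using Proposition \ref{prop_parameter}(2). (The paper reaches $(u\,\partial_u)^p$ through the Stirling-number expansion $k^p=\sum_r S(p,r)(k)_r$; your direct use of $(u\,\partial_u)^p u^k=k^p u^k$ and of the scale-invariance $u\,\partial_u=v\,\partial_v$ under $v=2u$ is a cleaner way to organize the same computation.)

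However, two of your displayed intermediate identities are off by the constant $\prod_{i=1}^l(x_i-y_i+1)$. Since
\begin{align*}
\sum_{k\ge 1}(2u)^k\Bigl(E_k\bigl(\tfrac{y}{x-y+1^l}\bigr)-E_k\bigl(\tfrac{y-1^l}{x-y+1^l}\bigr)\Bigr)=\mathcal M(2u),
\end{align*}
the correct statement is $\sum_k P(X=k)\,u^k=\frac{\prod_i(x_i-y_i+1)}{|T|}\,\mathcal M(2u)$, not $\mathcal M(2u)/|T|$: the prefactor $\prod_i(x_i-y_i+1)$ from the formula for $P(X=k)$ does not cancel the denominators inside the products (that would produce $\prod_i(x_i-y_i+1+2uy_i)$, a different expression). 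Correspondingly, $\mathcal M(2)=\bigl(\prod_i(x_i+y_i+1)-\prod_i(x_i+y_i-1)\bigr)/\prod_i(x_i-y_i+1)=|T|/\prod_i(x_i-y_i+1)$, so your claimed verification $\mathcal M(2)=|T|$ is false except when all $x_i=y_i$. Because the same missing constant appears in both the numerator $(v\,\partial_v)^p\mathcal M(v)|_{v=2}$ and the denominator $\mathcal M(2)$, it cancels in the ratio and your final formula is still the correct one; but as written the two intermediate claims should be repaired by carrying the factor $\prod_i(x_i-y_i+1)$ through (equivalently, by writing $\sum_k P(X=k)u^k=\mathcal M(2u)/\mathcal M(2)$ from the outset).
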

\begin{proof}
From the definition of moments we  deduce:
\begin{align*}
M^p&= \frac{1}{|T|}\sum_{k=1}^{l}   k^p\, {2^k}  R_k(x-y+1^l,y)-  \frac{1}{|T|}\sum_{k=1}^{l}   k^p\, {2^k}R_k(x-y+1^l, y-1^l)\\
&=\frac{1}{|T|}\left(\sum_{k=1}^{l}   k^p\, {u^{k}}  R_k(a,b)|_{a=x-y+1^l, b=y, u=2}- \sum_{k=1}^{l}   k^p\, {u^{k}}R_k(a,b)|_{a=x-y+1^l,b=y-1^l, u=2}\right).\\
\end{align*}
Let $ (k)_r= k(k-1)\dots (k-r+1)$. Recall that  $\partial_u^r(u^k)= (k)_r u^{k-r}$, $u^r\partial_u^r=(u\partial_u)_r$, and that 
$
\quad k^p=\sum_{r=0}^{p} S(p,r) (k)_r$, where  $S(p,r)$ are  Stirling numbers of the second  kind.
We get
\begin{align*}
\sum_{k=1}^{l} {k^p\,u^{k} R_{k}(a,b)}&= 
\sum_{k=1}^{l} \sum_{r=0}^{p} S(p,r) (k)_r\,u^{k} R_{k}(a,b) \\
&=\sum_{r=0}^{p} S(p,r) u^r \sum_{k=1}^{l} (k)_r\,u^{k-r} R_{k}(a,b)\\
&=\sum_{r=0}^{p} S(p,r) u^r\partial_u ^r\left(\sum_{k=1}^{l} u^k R_{k}(a,b)\right)\\
&= ( u\,\partial_u )^p\left(\sum_{k=1}^{l} u^k R_{k}(a,b))\right)=  ( u\,\partial_u )^p\mathcal R(a,b)[u].
\end{align*}
Then 
\begin{align}\label{Mp1}
M^p&= \frac{1}{|T|} ( u\,\partial_u )^p\left(\mathcal R(x-y+1^l,y)[u] -\mathcal R(x-y+1^l,y-1^l)[u]\right) |_{ u=2}.
\end{align}
Note that 
\begin{align*}
|T|=  {\prod_i(x_i+y_i+1) -\prod_i(x_i+y_i-1) }= \left(\mathcal R(x-y+1^l,y)[u] -\mathcal R(x-y+1^l,y-1^l)[u]\right) |_{ u=2}.
\end{align*}
To compete the proof  express everything in (\ref{Mp1}) through 

\begin{align*}
\mathcal M(u)&=\frac{1}{\prod (x_i-y_i+1)}\left(\mathcal R(x-y+1^l,y)[u] -\mathcal R(x-y+1^l,y-1^l)[u]\right)\\
&= \prod_{i=1}^{l }\left(1+ \frac{uy_i}{x_i-y_i+1}\right) - \prod_{i=1}^{l }\left(1+ \frac{u(y_i-1)}{x_i-y_i+1}\right).
\end{align*}
\end{proof}
\begin{corollary} \label{CorM}
Let $x_i\ge y_i$ for all $i=1,\dots, l$. On average a  result of aggregation  of $x$ and $y$  is longer than
 $x$  in  $M$ directions, where 
\[
M = 2 \ln (\mathcal M(u))^\prime |_{u=2}
\]
and  $\mathcal M(u)$ is given by (\ref{M(u)}).
\end{corollary}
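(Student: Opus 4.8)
The plan is to specialize the moment formula of Proposition \ref{moments} to the case $p=1$, where the combinatorics of Stirling numbers collapses completely since $S(1,0)=0$ and $S(1,1)=1$, so that $k^1=(k)_1=k$ and the operator $(u\partial_u)^p$ becomes simply $u\partial_u$. From Proposition \ref{moments} with $p=1$ we have
\[
M = M^1 = \frac{(u\,\partial_u)\,\mathcal M(u)}{\mathcal M(u)}\Big|_{u=2} = \frac{u\,\mathcal M'(u)}{\mathcal M(u)}\Big|_{u=2}.
\]
The only remaining step is to recognize the right-hand side as a logarithmic derivative: since $\frac{d}{du}\ln \mathcal M(u) = \mathcal M'(u)/\mathcal M(u)$, we get $u\,\mathcal M'(u)/\mathcal M(u) = u\,(\ln \mathcal M(u))'$, and evaluating at $u=2$ yields $M = 2\,(\ln \mathcal M(u))'|_{u=2}$, which is exactly the claimed formula.

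First I would state that $M$ is by definition the first moment $M^1$ of $X$ (the mean number of growth directions), so that the corollary is just the instance $p=1$ of Proposition \ref{moments}. Then I would carry out the Stirling-number specialization explicitly for one line to justify why $(u\partial_u)^1 = u\partial_u$, and then perform the logarithmic-derivative rewriting. One should also remark that $\mathcal M(u)$ does not vanish at $u=2$ — indeed $\mathcal M(2)\prod_i(x_i-y_i+1) = |T| > 0$ by Proposition \ref{prop_parameter} — so the expression is well-defined and the division in Proposition \ref{moments} is legitimate.

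There is essentially no obstacle here: the corollary is a direct and immediate consequence of Proposition \ref{moments}, and the proof is a two-line computation. If anything, the only point deserving care is making sure the notation $(\ln \mathcal M(u))'$ is understood as differentiation with respect to $u$ \emph{before} substituting $u=2$, which I would flag explicitly to avoid ambiguity. I would therefore present the proof as a short paragraph: invoke Proposition \ref{moments} with $p=1$, note $(u\partial_u)\mathcal M = u\,\mathcal M'$, and identify $u\,\mathcal M'/\mathcal M = u\,(\ln\mathcal M)'$, evaluated at $u=2$.

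\begin{proof}
By definition $M = M^1 = \sum_{k=1}^{l} k\,P(X=k)$ is the first moment of $X$. Applying Proposition \ref{moments} with $p=1$ and using $(u\,\partial_u)\mathcal M(u) = u\,\mathcal M'(u)$, we obtain
\[
M = \frac{u\,\mathcal M'(u)}{\mathcal M(u)}\Big|_{u=2}.
\]
Since $\mathcal M(2)\,\prod_{i=1}^l (x_i-y_i+1) = |T| > 0$ by Proposition \ref{prop_parameter}, the quantity $\mathcal M(2)$ is nonzero and the expression is well-defined. Finally, $\mathcal M'(u)/\mathcal M(u) = \big(\ln \mathcal M(u)\big)'$, so that $u\,\mathcal M'(u)/\mathcal M(u) = u\,\big(\ln \mathcal M(u)\big)'$, and evaluating at $u=2$ gives $M = 2\,\big(\ln \mathcal M(u)\big)'\big|_{u=2}$, as claimed.
\end{proof}
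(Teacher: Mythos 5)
Your proof is correct and is exactly the intended derivation: the corollary is the $p=1$ case of Proposition \ref{moments}, with $(u\,\partial_u)\mathcal M/\mathcal M$ rewritten as $u\,(\ln\mathcal M)'$ and evaluated at $u=2$ (the paper gives no separate proof, treating this as immediate). Your added check that $\mathcal M(2)\prod_i(x_i-y_i+1)=|T|>0$, so the division is legitimate, is a small but worthwhile precaution the paper leaves implicit.
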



\section{Aggregation  with a unit box}\label{Sec3}
\subsection{Properties of aggregation with a unit box.}
In case of aggregation with a  ``particle'', that is with a unit box  $y=(1^l)=(1,\dots, 1)$,    formulas of Section \ref{Sec2} simplify to nice expressions.  We summarize them in  Proposition \ref{MC-2}.  Clearly, each result  $z$ of aggregation $x+(1^l)\to z$ is an $l$-dimensional box $z=(z_1,\dots, z_l)$ those side lengths either remain  the same $z_i=x_i$, or grow by one, $z_i=x_i+1$. 

 \begin{proposition}\label{MC-2}
 In the  $l$-dimensional space consider  the process of the  aggregation of boxes $x+1^l\to z$.
 Then 

\begin{enumerate}
\item  The total number of ways for a particle $1^l$ to attach to the box $x$ is 
 \begin{align*}
 |T|= \prod_{i=1}^{l}(x_i+2)- \prod_{i=1}^{l}x_i
 \end{align*}
\item 
Each result of aggregation  $x+1^l\to z$  is of the form  $z=x+e_{i_1}+\dots + e_{i_k}$, where  $e_i=(0,\dots , \underset{i}{1}\dots, 0)$, $ 1\le i_1<\dots <i_k\le l$, and $1\le k\le l$. 
Transitional probabilities are given by 
\begin{align}\label{prop4_1}
P_{x, 1^l} ^{x+ e_{i_1}+ \dots +e_{i_k} } = P(i_1,\dots, i_k)=  \frac{2^{k} \, x_1\dots x_l}{x_{i_1}\dots x_{i_k}|T| }.
\end{align}
\item 
Probability  that  the resulting box  will grow  in size by $1$  in $k$ directions ($k=1,2,\dots, l$) is given by the formula
\begin{align}\label{prop4_2}
P( X=k)= \frac{2^{k} E_{l-k} (x_1,\dots, x_l)}{|T|}, 
\end{align}
where    $E_s(x_1,\dots, x_l)=\sum_{1\le i_1<\dots <\i_s\le l} x_{i_1}\dots x_{i_s}$ is the $s$-th elementary symmetric polynomial. 

\item On  average  through the aggregation with a particle  a box  $x$ grows in size in $M$  directions, where 
\[
M=\frac{\sum_{i=1}^l\frac{2}{2+x_i}}{1- \prod_{i=1}^l\frac{x_i}{x_i+2} },
\]

\end{enumerate}
\end{proposition}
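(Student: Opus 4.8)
The plan is to derive all four parts of Proposition \ref{MC-2} by specializing the general results of Sections \ref{Sec1} and \ref{Sec2} to the case $y = 1^l$, so that $y_i = 1$ for all $i$. Part (1) is immediate from Proposition \ref{prop_parameter}(2): substituting $y_i = 1$ gives $|T| = \prod_i(x_i + 1 + 1) - \prod_i(x_i + 1 - 1) = \prod_i(x_i + 2) - \prod_i x_i$.

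For part (2), I would first observe that since $y_i = 1$, each coordinate either stays fixed ($z_i = x_i$) or grows by exactly $1$ ($z_i = x_i + 1 = x_i + y_i$), so every result is of the form $z = x + e_{i_1} + \dots + e_{i_k}$ as claimed. The transitional probability is then just $P(i_1,\dots,i_k)$ from formula (\ref{prop_Pk}). Plugging $y_j = 1$ into that formula, the factor $\prod_{j \in \{i_1,\dots,i_k\}} y_j - \prod_{j \in \{i_1,\dots,i_k\}} (y_j - 1)$ becomes $1 - 0 = 1$ (using that at least one factor $y_j - 1 = 0$ when $k \ge 1$), and the factor $\prod_{j \notin \{i_1,\dots,i_k\}} (x_j - y_j + 1)$ becomes $\prod_{j \notin \{i_1,\dots,i_k\}} x_j$. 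Rewriting $\prod_{j \notin \{i_1,\dots,i_k\}} x_j = \frac{x_1 \cdots x_l}{x_{i_1} \cdots x_{i_k}}$ yields exactly (\ref{prop4_1}).

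For part (3), start from $P(X = k) = \sum_{1 \le i_1 < \dots < i_k \le l} P(i_1,\dots,i_k)$ and substitute the expression from part (2):
\[
P(X = k) = \frac{2^k}{|T|} \sum_{1 \le i_1 < \dots < i_k \le l} \frac{x_1 \cdots x_l}{x_{i_1} \cdots x_{i_k}} = \frac{2^k\, x_1 \cdots x_l}{|T|} \sum_{1 \le i_1 < \dots < i_k \le l} \frac{1}{x_{i_1} \cdots x_{i_k}}.
\]
The key identity is that $\sum_{1 \le i_1 < \dots < i_k \le l} \frac{1}{x_{i_1} \cdots x_{i_k}} = \frac{E_{l-k}(x_1,\dots,x_l)}{x_1 \cdots x_l}$, which follows since picking a size-$k$ subset of the reciprocals is complementary to picking a size-$(l-k)$ subset of the $x_i$'s themselves; equivalently it is $E_k(1/x_1,\dots,1/x_l) = E_{l-k}(x)/ (x_1\cdots x_l)$, a special case of the manipulation already used in Lemma \ref{MC-1}(2). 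This collapses the expression to (\ref{prop4_2}). For part (4), I would apply Corollary \ref{CorM}: with $y_i = 1$ the function $\mathcal M(u)$ of (\ref{M(u)}) becomes $\mathcal M(u) = \prod_i \left(1 + \frac{u}{x_i + 1 - 1}\right) - \prod_i(1 + 0) = \prod_i\left(1 + \frac{u}{x_i}\right) - 1$. Then $M = 2\, (\ln \mathcal M(u))' |_{u=2} = 2\, \frac{\mathcal M'(2)}{\mathcal M(2)}$; computing $\mathcal M'(u) = \left(\prod_i(1 + u/x_i)\right) \sum_i \frac{1/x_i}{1 + u/x_i} = \prod_i \frac{x_i + u}{x_i} \cdot \sum_i \frac{1}{x_i + u}$ and evaluating at $u = 2$, then dividing by $\mathcal M(2) = \prod_i \frac{x_i+2}{x_i} - 1$, and finally clearing the common factor $\prod_i \frac{x_i+2}{x_i}$ from numerator and denominator (multiply through by $\prod_i \frac{x_i}{x_i+2}$), gives $M = \frac{\sum_i \frac{2}{x_i+2}}{1 - \prod_i \frac{x_i}{x_i+2}}$, matching the stated formula.

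I do not anticipate a genuine obstacle here — every part is a direct substitution into a previously established formula followed by elementary algebraic simplification. The one point that needs a touch of care is the bookkeeping in part (3), making sure the symmetric-function identity $E_k(1/x) = E_{l-k}(x)/(x_1\cdots x_l)$ is invoked correctly (it requires $k \le l$, which holds), and in part (4) the algebra of clearing the common product so that the answer appears in the advertised form; both are routine.
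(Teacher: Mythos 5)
Your proposal is correct and follows essentially the same route as the paper: all four parts are obtained by substituting $y=1^l$ into the general formulas of Sections \ref{Sec1}--\ref{Sec2} (Proposition \ref{prop_parameter}, formula (\ref{prop_Pk}), Lemma \ref{MC-1}, and Corollary \ref{CorM}), with part (4) reducing to exactly the logarithmic-derivative computation the paper carries out. The algebraic simplifications you describe, including the identity $E_k(1/x_1,\dots,1/x_l)=E_{l-k}(x_1,\dots,x_l)/(x_1\cdots x_l)$ and the clearing of the common factor $\prod_i\frac{x_i+2}{x_i}$, match the paper's derivation.
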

\begin{proof}
The statements  follow by substitution $y=(1^l)$ in Propositions \ref{prop_parameter} nd \ref{MC-1}. For the last statement,  we compute the average number of directions as  the first moment of variable $X$ using Corollary \ref{CorM}:
 \begin{align*}
 M&= \frac{u\partial_u \left(\prod_{i=1}^l(1+\frac{u}{x_i}) - 1\right)}{\prod_{i=1}^l(1+\frac{u}{x_i}) - 1}|_{u=2}
 =\frac{\sum_{i=1}^l\frac{u}{u+x_i} \prod_{i=1}^l\left(1+\frac{u}{x_i}\right)}{\prod_{i=1}^l(1+\frac{u}{x_i}) - 1}|_{u=2}\\
  &=\frac{\sum_{i=1}^l\frac{u}{u+x_i}}{1- \prod_{i=1}^l\frac{x_i}{x_i+u} }|_{u=2}
    =\frac{\sum_{i=1}^l\frac{2}{2+x_i}}{1- \prod_{i=1}^l\frac{x_i}{x_i+2} },
 \end{align*}
 where we used (\cite{Mac} I, (2.10$^\prime$)): 
 \[
u \partial_u \prod_{i=1}^l\left(1+\frac{u}{a_i}\right)=\sum_{i=1}^l\frac{u}{u+a_i} \prod_{i=1}^l\left(1+\frac{u}{a_i}\right).
 \]
\end{proof}

\begin{example} $l=2$
\[M= \frac{x_1+x_2+4}{x_1+x_2+2}=1+ \frac{2}{x_1+x_2+2}.\]
Note that in this case  the average number of directions  depends on the perimeter of $x$, but does  not depend on  individual  lengths. As the perimeter grows to infinity, on average  the rectangle will  will grow  in one direction at each transition. Note also that $M<2$ for any rectangle  $x$.

\end{example}
\begin{example} $l=3$
\begin{align*}
M&= \frac{x_1x_2+x_1x_3+ x_2x_3+4(x_1+x_2+x_3)+12}{x_1x_2+x_1x_3+ x_2x_3+2(x_1+x_2+x_3)+4}\\
&=1+  \frac{2(x_1+x_2+x_3)+8}{x_1x_2+x_1x_3+ x_2x_3+2(x_1+x_2+x_3)+4}.
\end{align*}
If $x$ is not a unit cube$(1,1,1)$, then $x_1x_2+x_1x_3+ x_2x_3>4$, and  again we get  $M<2$.

\end{example}
\begin{corollary}
 Assume that  $x_i= O(N)$ as  $N\to \infty$ for all $i=1,\dots, l$.Then asymptotically 
 on   average  the size of the box $x$ grows in just  in  1  direction  by  the  aggregation with a unit box.
\end{corollary}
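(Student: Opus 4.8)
The plan is to start from the closed formula for the average number of growth directions established in Proposition \ref{MC-2}(4),
\[
M = \frac{\sum_{i=1}^l \frac{2}{2+x_i}}{1 - \prod_{i=1}^l \frac{x_i}{x_i+2}},
\]
and to extract its leading-order behavior as all $x_i$ grow proportionally to $N$. Both the numerator and the denominator tend to $0$, so the whole content of the statement is a $0/0$ analysis of this ratio; the limit is $1$, not because the two vanishing quantities are equal, but because they agree to leading order.

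The first step is to rewrite each factor of the denominator as $\frac{x_i}{x_i+2} = 1 - \varepsilon_i$ with $\varepsilon_i = \frac{2}{x_i+2} = O(1/N)$, and to expand the product:
\[
\prod_{i=1}^l (1-\varepsilon_i) = 1 - \sum_{i}\varepsilon_i + \sum_{i<j}\varepsilon_i\varepsilon_j - \cdots,
\]
so that $1-\prod_{i=1}^l \frac{x_i}{x_i+2} = \sum_i \varepsilon_i + O(N^{-2})$, the error collecting all products of two or more $\varepsilon$'s (there are only finitely many, since $l$ is fixed). The numerator of $M$ is exactly $\sum_i \varepsilon_i$, which is $\Theta(1/N)$ when each $x_i \asymp N$, so dividing gives
\[
M = \frac{\sum_i \varepsilon_i}{\sum_i \varepsilon_i\,\bigl(1+O(1/N)\bigr)} = \frac{1}{1+O(1/N)} \longrightarrow 1 .
\]

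An equivalent route, which I would also mention because it makes the word ``$1$ direction'' transparent, uses Proposition \ref{MC-2}(3): $P(X=k) = \frac{2^k E_{l-k}(x_1,\dots,x_l)}{|T|}$, where $|T| = \prod_i(x_i+2)-\prod_i x_i = \sum_{k=1}^l 2^k E_{l-k}(x)$. Under $x_i = \Theta(N)$ one has $E_{l-k}(x) = \Theta(N^{l-k})$, so the $k=1$ term $2E_{l-1}(x) = \Theta(N^{l-1})$ strictly dominates $|T|$; hence $P(X=1)\to 1$ and $P(X=k)=O(N^{1-k})\to 0$ for every $k\ge 2$, and $M=\sum_k k\,P(X=k)\to 1$.

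The only delicate point — and the only place the hypothesis is used — is the claim $E_{l-k}(x)=\Theta(N^{l-k})$, i.e.\ that no cancellation or degeneration occurs among the competing terms. I would make this precise by sandwiching: fixing $c,C>0$ with $cN\le x_i\le CN$ for all $i$ and all large $N$, one has $\binom{l}{l-k}(cN)^{l-k}\le E_{l-k}(x)\le \binom{l}{l-k}(CN)^{l-k}$, which is all that is needed. (If one reads the hypothesis ``$x_i=O(N)$'' more loosely as merely each $x_i\to\infty$ without comparable rates, the conclusion still holds provided at least $l-1$ of the $x_i\to\infty$, but the bookkeeping is messier and I would relegate it to a remark.) Beyond that estimate the argument is a routine limit computation, so I do not expect a genuine obstacle.
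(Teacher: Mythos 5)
Your argument is correct and is essentially the paper's own proof: the paper likewise clears denominators to write $M=\frac{2\sum_{i}\prod_{j\ne i}(x_j+2)}{\prod_i(x_i+2)-\prod_i x_i}$ and observes that numerator and denominator agree to leading order $\Theta(N^{l-1})$, which is exactly your $\varepsilon_i$-expansion in disguise. Your extra care with the $O(N^{-2})$ error terms and the reading of ``$x_i=O(N)$'' as $x_i\asymp N$ only makes explicit what the paper leaves implicit.
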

\begin{proof}
With   $x_i= O(N)$ as  $N\to \infty$ for all $i=1,\dots, l$ we can write that 
\begin{align*}
M=   \frac{2\sum_{i=1}^l\prod_{j\ne i}{(x_j+2)}}{\prod_{i=1}^l{(x_i+2)}- \prod_{i=1}^l{x_i} }
\to 1 \quad 
\text{
when $N\to \infty$.}
\end{align*}
\end{proof}

\subsection{Markov chain}\label{mark_1}
Repeating  aggregation  with a  unit box  $y= (1^l)$ 
one defines a Markov chain  on the   space  of  $l$-dimensional boxes  with  transition probabilities $\{p_{xz}= P_{x, (1^l)}^z\}$, given by (\ref{prop4_1}). Within standard definitions \cite{Chung},
the $n$-th step transition probabilities $\{p_{xz}^{(n)}\}$  are given  by recursion rule: 
\[
p_{x z}^{(0)}= \delta_{x,z}, \quad p_{xz}^{(1)}= p_{xz}=P_{x, (1^l)}^z, \quad p_{xz}^{(n)}=\sum_{t}p_{xt}^{(n-1)}p_{tz}. 
\]

\begin{example}
Let $l=2$. Then $x+(1,1) \to z$  produces three different results of aggregation $ \{ x+ e_1+e_2= (x_1+1, x_2+1), x+ e_1= (x_1+1, x_2), x+ e_2= (x_1, x_2+1)\}$. 
By (\ref{prop4_1})  transition probabilities are
\begin{align*}
p_{x, x+ e_1+e_2} = \frac{2}{x_1+x_2+2}, \quad 
p_{x, x+ e_1} = \frac{x_2}{x_1+x_2+2}, \quad 
p_{x, x+e_2} = \frac{x_1}{x_1+x_2+2}.
\end{align*}
One can write recurrence relation fo $n$-th transitional probabilities $p^{(n)}_{xz}$.  A box
 $z=(z_1,z_2)$ can be obtained as a result of aggregation from $(z_1-1,z_2-1)$, or $(z_1-1,z_2)$ or $(z_1,z_2-1)$.
Then
\begin{align*}
p^{(n)}_{x, (z_1, z_2)}=    \frac {2}{z_1+z_2}p^{(n-1)} _{x,(z_1-1, z_2-1)}+ \frac {z_2}{z_1+z_2+1} p^{(n-1)}_{x, (z_1-1, z_2)}+  \frac {z_1}{z_1+z_2+1}p^{(n-1)}_{ x,(z_1, z_2-1)}
\end{align*}
for any starting box $x$.
\end{example}

More  generally, one can formulate that in $l$-dimensional process, the $n$-th transitional probabilities satisfy the  recurrence relation
\begin{align}\label{trans}
p^{(n)}_{x, z}=  \sum_{k=1}^{l} \sum_{1\le i_1<\dots <i_k\le l} A_{i_1,\dots, i_k} p^{(n-1)}_{x, z- e_{i_1}-\dots - e_{i_k}},
\end{align}
where coefficients have a  quite involved form: 
\begin{align*}
A_{i_1,\dots, i_k}= \frac { 2^k\prod_{j\notin \{i_1\dots i_k\} } z_j}{ \prod_{j\in \{i_1\dots i_k\}}  (z_j+1) \prod_{j\notin \{i_1\dots i_k\} }( z_j+2) 
-\prod_{j\in \{i_1\dots i_k\}}  (z_j-1) \prod_{j\notin \{i_1\dots i_k\} } z_j    }
\end{align*}
Note  that (\ref{trans}) is a multidimensional analogue of Delannoy numbers recurrence relation  $D(n,k)=D(n-1,k)+ D(n-1,k-1) + D(n,k-1))$   with non-constant weights \cite{Del4, Del3, Del1, Del2}. It would be interesting to  investigate, if methods that are applied to  study Delannoy numbers sequence and its analogues could be  used  to 
understand the properties of  (\ref{trans}).

\section{ Aggregations of partitions}\label{Sec4}
\subsection{Two definitions }\label{Sec4-1}
Let us describe the growth model where all $90$-degree  rotations of  an $l$-dimensional box  around principle axis  represent the same cluster. 
\begin{definition}
  A  {\it partition} is a finite sequence  $\lambda=(\lambda_1, \dots, \lambda_l)$  of positive  integers  in non-increasing order: $\lambda_1\ge \dots\ge \lambda_l>0$, \cite{Mac}.
   The number   $l$  of coordinates is  called the {\it length of the partition}, and the sum of  the coordinates is denoted as $|\lambda|= \lambda_1+\dots+\lambda_l$. 
 \end{definition}

   For any $l$-dimensional  box  $x$ we  associate a partition $\lambda=(\lambda_1\ge \dots\ge \lambda_l)$ by rearranging the sizes of the box   $x$  in non-increasing order:
   \[
   \lambda_1=x_{\sigma(1)}\quad \ge\quad  \dots\ge \lambda_l=x_{\sigma(l)} \quad \text{for some}\quad \sigma \in S_l.
   \]
In this case  we write  $x\in [\lambda]$.
Note that  any permutation of coordinates of   $x$ can be matched with a   $90$-degree rotation of the box   around a  principle axis.
Indeed, a composition of  rotations of a box  $x$  permutes the lengths $ ( x_1,\dots, x_l)$, therefore,  any rotation of a box  is a box $(x_{\sigma(1)},\dots, x_{\sigma(l)})$ for some permutation $\sigma\in S_l$. A ninety-degree rotation around  a principle  axis  acts as  a transposition
$(x_1\dots x_i,\dots x_j,\dots  x_l)\mapsto (x_1,\dots x_j,\dots x_i,\dots  x_l) $. Symmetric group is generated by transpositions,  hence every possible permutation of components of  $x$ can be obtained by a composition of rotations. 

Thus,  $[\lambda]$ is exactly  the collection  of  all  different  $90$-degree  rotations around principle axis  of a box  with an array of sizes  $(\lambda_1,\dots, \lambda_l)$. Equivalence classes of  all $l$-dimensional boxes  up to their  $90$-degree  rotations are in one-to-one correspondence with the set of partitions of length $l$. 
\begin{example}
Let $\lambda=(3\ge 1\ge1)$. Then  $[\lambda]$ consists of three different boxes: 
$
[\lambda]=\{(3,1,1),\, (1,3,1),\, (1,1,3) \}. 
$
Let $\lambda= (a\ge\dots\ge a)$. Then  $[\lambda]$ consists of one  element, the  $l$-dimensional cube:
$
[\lambda]=\{(a,\dots, a) \}.
$
\end{example}

We define aggregation of partitions  in two equivalent ways. One of them is based on the  aggregation of  rotated versions of $l$-dimensional boxes, and the other one is based on permutations of parts of partitions.   
\begin{definition}\label{defp1}
 We say that a partition $\nu$  is a result of 
an  {\it aggregation of two partitions} $\lambda$ and $\mu$, and write 
$
[\lambda]+[\mu]\to [\nu],
$
 if  there exist  boxes $x\in[\lambda]$,  $y \in [\mu] $, and $z\in [\nu]$, such that 
$x+ y \to z
$.
\end{definition}
In other words, we fix an orientation  of the first box, and  consider  attachments of  all different orientations of the second  box. The  arrays of lengths of results 
of aggregations rearranged in non-increasing order  represent the  results of aggregations of partitions. 

Alternatively,  the same process of aggregation of partitions can be  defined  combinatorially.
\begin{definition}
Let $a, b \in \bZ_{>0}$. The {\it  set of overlaps of $a$ and $b$ } is the set of positive integers 
$\{c= a+b-s | s=0,1,\dots \min(a,b)\} $.

 If one thinks of $a$ and $b$  as lengths of two  integer strips, the set of overlaps consists of possible  lengths  covered by these  strips  where  one is placed on the top of the other,
 Figure \ref{Fig8}.
 
  \begin{figure}[h!]
\includegraphics [width=5cm] {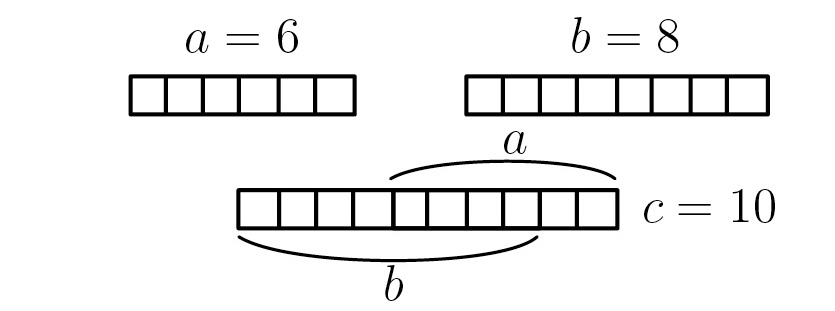}  
  \caption{  Example of  an overlap }
  \label{Fig8}
\end{figure}

We say that  the overlap  of the maximal  length $c= a+b$  is an {\it attachment}  of $a$ to $b$, and 
the  overlap  of the minimal  length $c=\max(a,b)$ is  an  {\it absorption} of  the smaller number from the pair $a,b $  by the  bigger one, Figure \ref{Fig9}.

  \begin{figure}[h!]
\includegraphics [width=5cm] {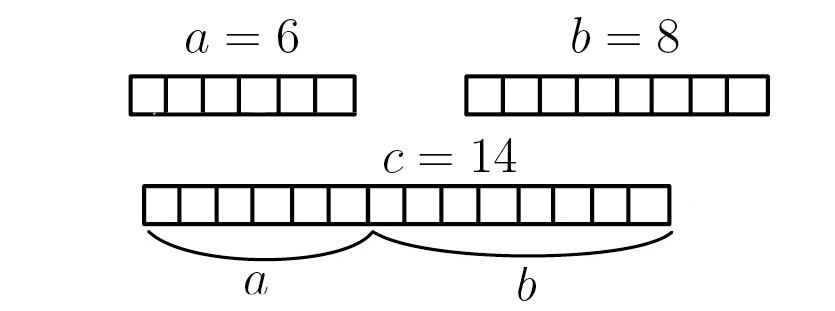}  \quad \includegraphics [width=5cm] {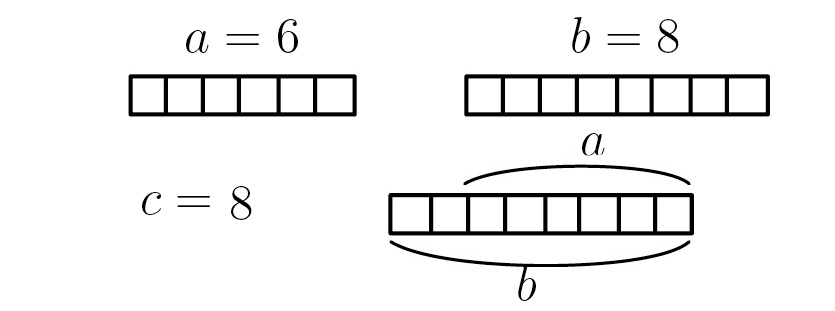}  
  \caption{ Examples of  an attachment and  an absorption }
  \label{Fig9}
\end{figure}

\end{definition}

\begin{definition} \label{defp2}
Let $\lambda= (\lambda_1\ge \dots \ge \lambda_l)$ and $\mu= (\mu_1\ge \dots \ge \mu_l)$
be two partitions  of length $l$. 
An aggregation of partitions $\lambda $ and $\mu$ is described by the following procedure.

\begin{enumerate}
\item  Fix a   permutation $\sigma$ of the set $ \{1,\dots, l\}$.
\item 
 For every $j \in  \{1,\dots, l\}$ create an overlap
  $\tilde \nu_j$  of parts $\lambda_j$ and $\mu_{\sigma(j)}$. One of  the overlaps
   $\tilde \nu_1,\dots, \tilde \nu_l $    must be necessarily an attachment. 
\item Put  parts   $\tilde \nu_1,\dots, \tilde \nu_l $   in non-increasing order   $\nu_1,\ge \dots \ge \nu_l$. 
 We say that the resulting partition $\nu$  is a {\it result of 
an   aggregation} of  $\lambda$ and $\mu$ and write \\
$
[\lambda]+[\mu]\to [\nu]
$.
\end{enumerate}
\end{definition}
From the discussion of Section \ref{Sec4-1} it is clear that Definitions \ref{defp1} and \ref{defp2}  are equivalent.

\subsection{Probability distributions of  aggregations of partitions.}\label{sec23}
Transitional probabilities  
$
P_{\lambda,\mu}^\nu= P( [\lambda]+[\mu]\to [\nu] )
$
of    partitions aggregations  can be computed from either definition.    Definition \ref{defp1} describes  $P_{\lambda,\mu}^\nu$ as the  number of  occurrences   of a  box  $z\in [\nu]$ divided by the total number of  all attachments  $x$ and $y$ for all  $y\in [\mu]$ and a fixed representative $x\in [\lambda]$.

Definition \ref{defp2}  equivalently  defines 
 $P_{\lambda,\mu}^\nu$ as  the  number of  occurrences  of partition $\nu$  created by implementing steps (1)-(3) of Definition \ref{defp2}
 for all permutations of parts of $[\mu]$ and for  all overlaps.

\begin{example}\label{ex4-1}
Tables \ref{table:4} --\ref{table:7} are examples of  probability distributions  of   partitions aggregations.
\begin{table}[h!]
\centering

{\small
\begin{tabular}{ @{}lccccc cc } 
  \hline
 $\nu $&[5,2]& [5,1]&[4,3]& [4,2]&[3,3]&[3,2]
 &otherwise\\
\hline
$P_{[3,1],[2,1]}^\nu$& $\frac{2}{14}$ &$\frac{1}{14}$& $\frac{2}{14}$ &$\frac{4}{14}$& $\frac{3}{14}$&$\frac{2}{14}$ &  0\\
\hline 
 \end{tabular}}
\caption{Results of aggregation $[3,1]+[2,1]$.}
\label{table:4}
\end{table}

\begin{table}[h!]
\centering

{\small
\begin{tabular}{@{} lcccccccc @{}  } 
  \hline
 $\nu $&[7,3]& [7,2] &[6,3]&[5,5]&[5,4]& [5,3]&[4,3]&otherwise\\
\hline
$P^{\nu}_{[4,2],[3,1]}$&  $\frac{1}{10}$&  $\frac{1}{10}$ &  $\frac{1}{10}$ &  $\frac{1}{10}$ &  $\frac{3}{10}$ &  $\frac{2}{10}$&  $\frac{1}{10}$ & 0\\
\hline 
 \end{tabular}}
\caption{Results of aggregation $[4,2]+[3,1]$.}
\label{table:5}
\end{table}

\begin{table}[h!]
\centering

{\small
\begin{tabular}{@{} lccccc cccccc @{}  } 
  \hline
 $\nu $&[8,3]&[8,2]& [7,3]&[6,5]& [6,4]&[6,3]&[5,5]&[5,4]&[5,3]&[4,3]
 &otherwise\\
\hline
$P_{[4,2],[4,1]}^\nu$& $\frac{2}{22}$& $\frac{2}{22}$& $\frac{2}{22}$ &  $\frac{2}{22}$ & $\frac{4}{22}$& $\frac{2}{22}$ & $\frac{2}{22}$ & $\frac{3}{22}$& $\frac{2}{22}$ &$\frac{1}{22}$ & 0\\
\hline 
 \end{tabular}}
\caption{Results of aggregation $[4,2]+[4,1]$.}
\label{table:6}
\end{table}

\quad\\
\begin{table}[h!]
\centering
{\small
\begin{tabular}{@{} lccccc ccc @{}  } 
  \hline
 $\nu $&[6,2,2]&[6,2,1]& [6,1,1]&[5,2,2]& [5,2,1]&[4,4,2]&[4,4,1] &
\\
\hline
$P_{[3,1,1],[3,1,1]}^\nu$& $\frac{8}{190}$&  $\frac{8}{190}$ & $\frac{2}{190}$ & $\frac{8}{190}$  & $\frac{8}{190}$ & $\frac{16}{190}$ & $\frac{8}{190}$ &\\
\hline 
 &&&&&&&\\
\hline 
 $\nu $&[4,3,2]&[4,3,1]& [4,2,2]&[4,2,1]& [3,3,2]&[3,2,2]&[3,2,1] & otherwise 
\\
\hline
$P_{[3,1,1],[3,1,1]}^\nu$& $\frac{48}{190}$&  $\frac{24}{190}$ & $\frac{8}{190}$ & $\frac{8}{190}$  & $\frac{36}{190}$ & $\frac{4}{190}$ & $\frac{4}{190}$ & 0\\
\hline 
 \end{tabular}}
 
\caption{Results of aggregation $[3,1,1]+[3,1,1]$.}
\label{table:7}
\end{table}

\end{example}

\subsection{Basic properties of  transitional probabilities }
Basic properties of   probability distribution  for  aggregation of partitions   follow from the definitions.
\begin{proposition}
 \begin{enumerate}
\item  
 For any partitions $\lambda, \mu$,
$
\sum_{\nu} P_{\lambda,\mu}^\nu =1
$
\item 
$P_{\lambda,\mu}^\nu=P_{\mu,\lambda}^\nu $  For any partitions $\lambda, \mu,\nu$,

\item 
$P^{\lambda,\mu}_\nu=0$ if $\nu_1\notin [\max(\mu_1, \lambda_1),  \mu_1+ \lambda_1]$.
\end{enumerate}
\end{proposition}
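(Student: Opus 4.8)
All three claims are ``soft'' consequences of the two equivalent definitions of aggregation of partitions, so the strategy is to reduce each to a corresponding fact already established for aggregation of boxes (Proposition~\ref{basic_prop}) or to an immediate bookkeeping argument using Definition~\ref{defp2}. The cleanest path is to work with Definition~\ref{defp1} for items (1) and (3), since those are inherited from the box model, and with Definition~\ref{defp2} for item (2), where the symmetry is transparent.

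\textbf{Step 1 (normalization).} Fix a representative $x\in[\lambda]$. By Definition~\ref{defp1} and the paragraph following it, $P_{\lambda,\mu}^\nu$ is the number of attachments of $x$ with some $y\in[\mu]$ yielding a $z\in[\nu]$, divided by the total number $N$ of all attachments of $x$ with all $y\in[\mu]$. Summing over $\nu$ counts every attachment exactly once: each attachment $x+y\to z$ has a well-defined $z$, hence a well-defined partition class $[\nu]$ with $z\in[\nu]$ (rearrange the sides of $z$ into non-increasing order). Thus $\sum_\nu P_{\lambda,\mu}^\nu = N/N = 1$. Equivalently, one may invoke Proposition~\ref{basic_prop}(1): for each fixed $y\in[\mu]$ the box-level distribution sums to $1$, and the partition-level distribution is the average of these over the (finitely many) $y\in[\mu]$, hence also sums to $1$.

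\textbf{Step 2 (symmetry).} Here I would use Definition~\ref{defp2}. A ``configuration'' producing a result consists of a permutation $\sigma\in S_l$ and a choice of overlap $\tilde\nu_j$ for each $j$ (with at least one attachment), applied to the pair $(\lambda,\mu)$ via $\tilde\nu_j = $ overlap of $\lambda_j$ and $\mu_{\sigma(j)}$. For the pair $(\mu,\lambda)$, the map sending $(\sigma,\{\tilde\nu_j\})$ to $(\sigma^{-1},\{\tilde\nu_{\sigma^{-1}(j)}\})$ is a bijection between configurations for $(\lambda,\mu)$ and configurations for $(\mu,\lambda)$: the overlap of $\lambda_j$ and $\mu_{\sigma(j)}$ equals the overlap of $\mu_{\sigma(j)}$ and $\lambda_j$ (overlaps are visibly symmetric in the two strips), so reindexing $j\mapsto\sigma(j)$ identifies the two multisets $\{\tilde\nu_j\}$, and the ``at least one attachment'' condition is preserved. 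Since the resulting sorted partition $\nu$ is unchanged, this bijection shows $P_{\lambda,\mu}^\nu = P_{\mu,\lambda}^\nu$ for every $\nu$. (Alternatively, this is just Proposition~\ref{basic_prop}(2) averaged over representatives, using that $[\lambda]$ and $[\mu]$ play symmetric roles once one also averages over the choice of fixed representative — but the combinatorial bijection is the most direct.)

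\textbf{Step 3 (support in the first coordinate).} By Definition~\ref{defp1}, if $P_{\lambda,\mu}^\nu\neq 0$ there are boxes $x\in[\lambda]$, $y\in[\mu]$, $z\in[\nu]$ with $x+y\to z$. By Proposition~\ref{basic_prop}(4), $\max(x_i,y_i)\le z_i\le x_i+y_i$ for every coordinate $i$. Taking the maximum over $i$ and using that the maximal coordinate of $z$ is $\nu_1$, the maximal coordinate of $x$ is $\lambda_1$, and the maximal coordinate of $y$ is $\mu_1$ (rearrangement does not change the maximum of an array), we get $\nu_1 = \max_i z_i \le \max_i (x_i+y_i) \le \lambda_1+\mu_1$ and $\nu_1 = \max_i z_i \ge \max_i x_i = \lambda_1$, and likewise $\nu_1\ge\mu_1$, so $\nu_1\ge\max(\lambda_1,\mu_1)$. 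Hence $\nu_1\in[\max(\mu_1,\lambda_1),\ \mu_1+\lambda_1]$, which is the contrapositive of (3).

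\textbf{Main obstacle.} None of the steps is genuinely hard; the only point requiring care is the bijection in Step~2 — specifically checking that composing the permutation inverse with the reindexing of overlaps really lands in the set of valid configurations for $(\mu,\lambda)$ and does not double-count. I would spell that bijection out explicitly rather than wave at ``symmetry,'' since that is where a careless argument could hide an error. Items (1) and (3) are routine transfers from the box model.
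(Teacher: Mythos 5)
Your proof is correct. The paper itself gives no argument for this proposition --- it only remarks that the properties ``follow from the definitions'' --- so your write-up supplies precisely the routine verification that is left implicit: (1) and (3) are transferred from the box-level statement (Proposition~\ref{basic_prop}), and (2) is established by the overlap-reindexing bijection at the level of Definition~\ref{defp2}. One small imprecision worth fixing: in Step~1 the parenthetical claim that the partition-level distribution is ``the average'' of the box-level distributions over $y\in[\mu]$ is not literally true for $l\ge 3$, because different orientations $y$ of $\mu$ can admit different total numbers of attachments to a fixed $x$ (for instance $x=(1,2,4)$ gives $|T|=190$ against $y=(5,2,3)$ but $|T|=184$ against $y=(2,5,3)$); it is a weighted average with weights proportional to these totals. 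This does not affect the conclusion --- a convex combination of probability distributions still sums to $1$ --- and your primary argument, counting each attachment exactly once in the numerator when summing over $\nu$, is the correct and sufficient one.
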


\section{Markov chains of self-aggregations }\label{Sec5}

\subsection{Repeated self-aggregations}
Similarly to Section \ref{mark_1}, one can consider a  Markov chain of  repeated aggregations with a fixed partition. 
In particular, since a unit box $y=(1^l)$ is invariant under all permutations of coordinates,  observations of  Section \ref{mark_1} can be 
translated from  the  model of   boxes  aggregation  to aggregation   of partitions in a straightforward way. 

In this section we study another class of  Markov chains,  based on self-aggregations of  partitions. This process can be  related to monodisperse or hierarchical aggregations, see e.g. \cite{BJK84}, \cite{WB89}.
Self-aggregation is an  aggregation where  a partition aggregates with its own copy.  The transition probabilities   on the space of 
partitions of length $l$  are $\{\tilde p_{\lambda,\nu}=P_{\lambda,\lambda}^\nu\}$.
Then the $n$-th step transition probabilities $\{\tilde p_{\lambda, \nu}^{(n)}\}$ are defined by recursion rule: 
\[
\tilde p_{\lambda, \nu}^{(0)}= \delta_{\lambda,\nu}, \quad \tilde p_{\lambda, \nu}^{(1)}= P_{\lambda, \lambda}^{ \nu},
\quad \tilde p_{\lambda,\nu}^{(n)}=\sum_{\mu} \tilde  p_{\lambda\mu} ^{(n-1)}\tilde p_{\mu,\nu}^{(1)}.
\]
Constructions below are motivated by observations of \cite{Sor1}, \cite{ Sor2} outlined in Section \ref{Sec6}. 
We aim to trace analytically the presence of mathematical constants $\varphi$ and $\psi$ in two- and three- dimensional growth process (see Section  \ref{Sec6}).
  We follow the path of the  most frequent  transitions of self-aggregations and look at the asymptotical behavior  of the proportions of boxes that appear in  such sequence.  In two-dimensional case 
  our results do  involve $\varphi$ in the description of the  limiting proportions of the  most frequent  transitions of self-aggregations. In three-dimensional case our methods  do not 
 trace  the presence of  $\psi$, but  further study would be  necessary to understand better the behavior  of the  most frequent transitions in  self-aggregation growth.

  \subsection{The highest  probability weight transitions}\label{Sec_7.3}
  
  \begin{definition} 
We say that  partition  $\nu $   is the  aggregation result   of  two partitions $ \lambda$ and $\mu$    with  {\it the highest probability weight} or is {\it the most frequent transition},  if the  probability 
$P_{\lambda,\mu}^\nu$ has the maximal value over  the set of all possible results of aggregations $[\lambda] +[\mu] \to [\nu]$.

\end{definition}
Table \ref{table:11} provides  the highest probability weight   aggregation results based on Example \ref{ex4-1}.
\begin{table}[h!]
\centering

\begin{tabular}{ @{}lc @{} } 
 \hline
Aggregation & The highest probability weight result \\ 
\hline 
 $[3,1]+ [2,1]$  &  $[4,2]$  \\
 $[4,2]+ [4,1]$ &  $[6,4]$\\
 $[4,2]+ [3,1]$ &   $[5,4]$ \\
$[3,1,1]+[3,1,1]$  & $[4,3,2]$  \\
  $[\lambda]+[1^l]$  & $(\lambda_1\ge  \dots \ge \lambda_{l-1}\ge \lambda_l+1>0)$\\
\hline
  \end{tabular}
\caption{Examples of the highest probability weight  results of  partitions aggregations }
\label{table:11}
\end{table}
Next we  describe the growth along the most frequent transitions in two-dimensional case. 
\begin{proposition} \label{prop62d}
\quad 
\begin{enumerate}
\item 
Consider  self-aggregation  $[\lambda] +[\lambda]\to [\nu]$ of a partition $\lambda=(\lambda_1, \lambda_2)$,  where $\lambda_1>\lambda_2+3$. Then the highest  probability weight   result  is  $\nu=(\lambda_1+\lambda_2, \lambda_1)$.
\item
Let $\lambda^{(1)}=(\lambda_1, \lambda_2)$ with $\lambda_1>\lambda_2+3$. Consider  a sequence  of self-aggregations of partitions
\[
[\lambda^{(N)}]+[\lambda^{(N)}]\to [\lambda^{(N+1)}], \quad  N=1,2,\dots, \quad 
\]
 where on each step  the highest probability weight result is picked as   $\lambda^{(N+1)} $   for the next  step of aggregation.
Then the shape of the rectangle $\lambda^{(N)}$ tends to the  ``golden rectangle":
\[
\lim_{N\to\infty }\frac{\lambda^{(N)}_1}{\lambda^{(N)}_2}= \varphi =\frac{1+\sqrt 5}{2} \simeq 1.6180, 
\]
and transitional probabilities of the sequence have the limit 
\[
\lim_{N\to\infty }P^{\lambda^{(N)}\, \lambda^{(N)}}_{\lambda^{(N+1)}}= \frac{1}{2}-\frac{1}{\varphi+1}\simeq 0.1180,
\]

\end{enumerate}
\end{proposition}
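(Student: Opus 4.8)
The plan is to convert the sequence $(\lambda^{(N)})_{N\ge 1}$ into an explicit second-order linear recurrence using part (1), and then extract both limits from the standard asymptotics of such recurrences, the value of the limiting transition probability coming from a direct reading of Table~\ref{table:3}.

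Write $\lambda^{(N)}=(a_N,b_N)$. By part (1), as long as $a_N>b_N+3$ the most frequent transition from $[\lambda^{(N)}]+[\lambda^{(N)}]$ has result $(a_N+b_N,a_N)$, so
\[
a_{N+1}=a_N+b_N,\qquad b_{N+1}=a_N .
\]
The first task is to check that the hypothesis $a_N>b_N+3$ persists, so that this recurrence is legitimate at every step; this is where the content of part (1) is actually invoked, and it is essentially immediate from the recurrence, since $a_{N+1}-b_{N+1}=b_N=a_{N-1}$ and $(a_N)$ is strictly increasing (one verifies the first step or two by hand, using $a_1\ge 5$). Eliminating $b_N=a_{N-1}$ gives the Fibonacci recurrence $a_{N+1}=a_N+a_{N-1}$, whose dominant characteristic root is $\varphi$; since $a_N\to\infty$, $a_N$ is asymptotic to a positive constant times $\varphi^{N}$, hence
\[
\frac{\lambda^{(N)}_1}{\lambda^{(N)}_2}=\frac{a_N}{b_N}=\frac{a_N}{a_{N-1}}\longrightarrow\varphi .
\]

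For the probability I would compute $P^{(\lambda_1+\lambda_2,\lambda_1)}_{(\lambda_1,\lambda_2),(\lambda_1,\lambda_2)}$ directly from Table~\ref{table:3}. A self-aggregation of $[\lambda]$ with $\lambda=(\lambda_1\ge\lambda_2)$ means aggregating the fixed box $x=(\lambda_1,\lambda_2)$ with each of the two boxes $y=(\lambda_1,\lambda_2)$ and $y=(\lambda_2,\lambda_1)$ of $[\lambda]$, so the total number of attachments is $4(\lambda_1+\lambda_2)+4(\lambda_1+\lambda_2)=8(\lambda_1+\lambda_2)$. Matching the rows of Table~\ref{table:3} against the target side multiset $\{\lambda_1+\lambda_2,\lambda_1\}$ for the two choices of $y$: the case $y=(\lambda_1,\lambda_2)$ produces no such box (for $\lambda_1\ne2\lambda_2$, which is automatic for all terms of the sequence past the first), while for $y=(\lambda_2,\lambda_1)$ exactly the two rows giving $(x_1+y_1,\max(x_2,y_2))$ and $(\max(x_1,y_1),x_2+y_2)$ — the attachments absorbing the box in one coordinate — do, contributing $\bigl(2|\lambda_1-\lambda_2|+2\bigr)+\bigl(2|\lambda_1-\lambda_2|+2\bigr)=4(\lambda_1-\lambda_2+1)$ occurrences; hence
\[
P^{(\lambda_1+\lambda_2,\lambda_1)}_{(\lambda_1,\lambda_2),(\lambda_1,\lambda_2)}=\frac{4(\lambda_1-\lambda_2+1)}{8(\lambda_1+\lambda_2)}=\frac{\lambda_1-\lambda_2+1}{2(\lambda_1+\lambda_2)} .
\]
(This count is already contained in the proof of part (1), where it is compared with all competing results.) Substituting $\lambda^{(N)}=(a_N,b_N)$, dividing numerator and denominator by $a_N$, and using $b_N/a_N=a_{N-1}/a_N\to1/\varphi$ gives
\[
\lim_{N\to\infty}P^{\lambda^{(N)}\,\lambda^{(N)}}_{\lambda^{(N+1)}}=\frac{1-1/\varphi}{2(1+1/\varphi)}=\frac{\varphi-1}{2(\varphi+1)}=\frac12-\frac{1}{\varphi+1}\simeq0.1180 ,
\]
where the last identity is $\frac{\varphi-1}{2(\varphi+1)}=\frac{(\varphi+1)-2}{2(\varphi+1)}$ (and, using $\varphi^2=\varphi+1$, this also equals $\frac{1}{2\varphi^{3}}$).

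I expect the one genuinely delicate step to be the propagation of $a_N>b_N+3$ — equivalently, the assertion that the Fibonacci step, and not one of the competing ``absorption'' results, is the most frequent transition at every stage; once that is in place, the ratio limit is the classical fact about second-order recurrences and the probability limit is routine. The secondary point needing care is the row-by-row matching in Table~\ref{table:3}, carried out for both rotations of the second box, that pins the occurrence count at exactly $4(\lambda_1-\lambda_2+1)$.
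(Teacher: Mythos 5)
Your part (2) is essentially the paper's argument: the same recurrence $a_{N+1}=a_N+b_N$, $b_{N+1}=a_N$ (which the paper solves explicitly as $\lambda^{(N)}_1=F_N\lambda_1+F_{N-1}\lambda_2$, $\lambda^{(N)}_2=F_{N-1}\lambda_1+F_{N-2}\lambda_2$), the same ratio limit, and your occurrence count $4(\lambda_1-\lambda_2+1)$ out of $8(\lambda_1+\lambda_2)$ is exactly the entry $\tfrac{\lambda_1-\lambda_2+1}{2(\lambda_1+\lambda_2)}$ of the paper's Table \ref{table:8}, followed by the same algebra $\tfrac{\varphi-1}{2(\varphi+1)}=\tfrac12-\tfrac1{\varphi+1}$. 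The first genuine gap is that part (1) --- half of the statement --- is never proved: you compute the weight of the candidate $\nu=(\lambda_1+\lambda_2,\lambda_1)$ but never compare it with the competing results. That comparison is where the paper does its real work: it tabulates the full self-aggregation distribution (Table \ref{table:8}, i.e.\ Table \ref{table:3} applied to both orientations of the second box), and, because distinct rows of that table can collapse onto the same partition (for instance $(2\lambda_1-s,2\lambda_2)$ and $(\lambda_1+\lambda_2,\lambda_1+\lambda_2-s')$ coincide when $\lambda_1<2\lambda_2$), it argues that $\tfrac{\lambda_1-\lambda_2+1}{2(\lambda_1+\lambda_2)}$ exceeds the sum of any three other entries; this is precisely what the hypothesis $\lambda_1>\lambda_2+3$ is for. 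Deferring all of this to ``part (1)'' leaves the heart of the proposition unproved.

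The step you single out as delicate --- propagation of $a_N>b_N+3$ --- is indeed where the induction is incomplete, and your fix does not close it. For $N\ge 3$ one has $a_N-b_N=b_{N-1}=a_{N-2}\ge a_1\ge 5$, but $a_2-b_2=b_1=\lambda_2$, and the hypothesis $\lambda_1>\lambda_2+3$ gives no lower bound on $\lambda_2$; ``verifying the first step by hand using $a_1\ge5$'' cannot help, because the obstruction is $b_1$, not $a_1$. This is not cosmetic: for $\lambda^{(1)}=(10,1)$ one gets $\lambda^{(2)}=(11,10)$, and the most frequent result of $[11,10]+[11,10]$ is $[21,20]$ with weight $\tfrac{12}{168}$ (three coinciding rows each contributing $\tfrac{4}{168}$) rather than $[21,11]$ with weight $\tfrac{8}{168}$, so the sequence continues $(21,20),(41,40),\dots$ and the ratio tends to $1$, not $\varphi$. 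The paper's own proof shares this defect --- it asserts $\lambda^{(N-1)}_2>3$ for all $N\ge2$, which at $N=2$ is the unjustified claim $\lambda_2>3$ --- and both arguments are repaired by adding the hypothesis $\lambda_2>3$, after which $b_N\ge\lambda_2>3$ for all $N$ and the induction runs.
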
 
\begin{proof}
\begin{enumerate}
\item 
Let  $\nu$ be a result of aggregation $[\lambda]\, +\,[\lambda]\,\to \, [\nu]$.
Any  rectangle $z \in [\nu]$  is either a result of  aggregation  $(\lambda_1, \lambda_2)+ (\lambda_1, \lambda_2)\to z$, or 
$(\lambda_1, \lambda_2)+ (\lambda_2, \lambda_1)\to z $.  From Table \ref{table:3}
we get the transitional probabilities for  self-aggregation of partitions of two parts, Table \ref{table:8}.
\begin{table}[h!]
\centering

 \begin{tabular}{@{} lcc@{} } 
 \hline
$\nu$&$P_{\lambda,\lambda}^\nu$  \\ 
 \hline
 $(2\lambda_1,2\lambda_2)$&$\frac{1}{2(\lambda_1+\lambda_2)}$\\ 
  $(2\lambda_1-s,2 \lambda_2), \quad s=1,\dots,\lambda_1 -1 $   &   $\frac{1}{2(\lambda_1+\lambda_2)}$\\ 
 $(2\lambda_1,2 \lambda_2-s), \quad s=1,\dots,\lambda_2 -1 $   & $\frac{1}{2(\lambda_1+\lambda_2)}$\\
$(2\lambda_1,\lambda_2)$&$\frac{1}{4(\lambda_1+\lambda_2)}$\\
$(\lambda_1,2\lambda_2)$&$\frac{1}{4(\lambda_1+\lambda_2)}$\\
 $(\lambda_1+\lambda_2,\lambda_1+\lambda_2)$&$\frac{1}{2(\lambda_1+\lambda_2)}$\\
  $(\lambda_1+\lambda_2,\lambda_1+ \lambda_2-s), \quad s=1,\dots,\lambda_2 -1 $   &$\frac{1}{\lambda_1+\lambda_2}$\\
  $(\lambda_1+\lambda_2,\lambda_1)$ &$\frac{\lambda_1-\lambda_2+1}{2(\lambda_1+\lambda_2)}$\\ 
  otherwise & $0$\\
 \hline 
 \end{tabular}
 \caption{Probability distribution for self-aggregation of $[\lambda_1, \lambda_2]$. }
\label{table:8}
\end{table}

It is clear that if $\lambda_1-\lambda_2>3$, the  frequency  $P_{\lambda, \lambda}^{(\lambda_1+\lambda_2,\lambda_1)}=(\lambda_1-\lambda_2+1)/2(\lambda_1+\lambda_2)$ of the  aggregation result
$\nu =(\lambda_1+\lambda_2,\lambda_1)$ is not only the highest value in the table, but  it is  higher than the sum of  frequencies  of any other three entries. Hence, even if different values of $s$ produce results of the same sizes,  $\nu$ remains to be the most frequent one. 

\item 
Note that ${\lambda^{(N)}_1}-{\lambda^{(N)}_2}=\lambda^{(N-1)}_2>3$ for every $N=2,3,\dots$. Hence,  from part (1),  by induction on the number of the step, 
the $N$-th result of the sequence of the most frequent  result of self-aggregations  has   parts  
\[
{\lambda^{(N)}_1}= F_N\lambda_1+ F_{N-1}\lambda_2 , \quad {\lambda^{(N)}_2}=F_{N-1}\lambda_1+ F_{N-2}\lambda_2,
\]
where  $F_1=F_2=1$, $F_{k+1}= F_{k-1}+F_{k}$ is the Fibonacci sequence.
This  implies the  limit of ratios of sizes and of transitional probabilities:  
\[
\lim_{N\to\infty }\frac{\lambda^{(N)}_1}{\lambda^{(N)}_2}=\lim_{N\to\infty }\frac{F_N\lambda_1+ F_{N-1}\lambda_2}{F_{N-1}\lambda_1+ F_{N-2}\lambda_2}=  \varphi, 
\]

\[
\lim_{N\to\infty }P_{\lambda^{(N)}\, \lambda^{(N)}}^{\lambda^{(N+1)}}=\lim_{N\to\infty }  \frac{ \lambda^{(N)}_1/\lambda^{(N)}_2-1+1/\lambda^{(N)}_2}{  2(\lambda^{(N)}_1/\lambda^{(N)}_2+1)}=
\frac{\varphi-1}{2(\varphi+1)}= \frac{1}{2}-\frac{1}{\varphi+1}.
\]

\end{enumerate}
\end{proof}
\begin{remark}
 From Table \ref{table:8}  observe that  $\lim_{N\to\infty }P_{\lambda^{(N)}\, \lambda^{(N)}}^{\nu}=0$  for all other $\nu\ne\lambda^{(N+1)}$. \end{remark}
 
\begin{example}  We illustrate  Proposition \ref{prop62d} with  Table \ref{table:9} of computer-aided calculations of a  sequence self-aggregations  that starts with $\lambda^{(1)}=[10,6]$
and picks for the next step the highest probability weight result.
\begin{table}[h!]
\centering

{\small
\begin{tabular}{@{} clll } 
  \hline
  $N$	 &  	$\lambda^{(N)}$ 	&	 $\lambda_1^{(N)}/\lambda_2^{(N)} $ 	&  	 $P_{\lambda^{(N-1)}\, \lambda^{(N-1)}}^{\lambda^{(N)}}$
  \\
\hline 
1   & [10, 6]	&1.667	& $\frac{6}{40}=0.1500$	\\
\hline 
2   & [16, 10]	&1.6		& $\frac{10}{64}\simeq0.1563$ 	\\
\hline 
3 & [26, 16]	& 1.625	& $\frac{14}{104}\simeq 0.1346$	\\
\hline 
4  & [42, 26]	& 1.6154\	& $\frac{22}{168} \simeq 0.1310$	\\
\hline 
5   &[68, 42] &1.6190	&  $\frac{34}{272} =0.1250$	\\
\hline 
6   & [110, 68]	&1.6176	&  $\frac{54}{440}\simeq 0.1227$ 	\\
\hline 
7   &[178, 110]&1.6182	& $\frac{86}{712}\simeq 0.1208$ 	\\
\hline 
8   &[288, 178] & 1.6180	&$\frac{138}{1152}\simeq 0.1198$	\\
\hline 
9   &[466, 288]&1.6181	& $\frac{222}{1864}\simeq 0.1191$	\\
\hline 
 \end{tabular}}
 \caption{A sequence of highest probability weight  self-aggregations.}
\label{table:9}
\end{table}

\end{example}

\subsection{The most  frequent  self-aggregation of three-dimensional  partitions }
The direct analogy of the results of Section \ref{Sec_7.3} in the three-dimensional case
that would support interpretations of of \cite{Sor1}, \cite{ Sor2} 
 would be a stabilization of (almost) any sequence of the most frequent  aggregations to a pattern  of the form  $[\lambda_1, \lambda_2, \lambda_3] \to [\lambda_1+\lambda_3, \lambda_1, \lambda_2]$. This would  lead to  natural  appearance of $\psi$  in the  asymptotical  description  of proportions of    highest probability weight self-aggregations. 
However, we did not observe such phenomena in general.  It seems that in many typical examples after a few steps the process    produces   not just one, but  a family of  most frequent  results with the same probability weight, as illustrated in Figures \,\ref{Fig11}-\ref{Fig13}. Moreover, once this process hits a cube $(a,a,a)$ shape,  the next step  brings   a very  large  number of  equally likely  most frequent  results (Figures \,\ref{Fig11} and \,\ref{Fig13}).

  Thus,  tracing the growth of   highest probability weight self-aggregations  provides an evidence for the proposed in \cite{Sor1, Sor2}  interpretation of the peak of probability distribution of proportions   for $l=2$,  but does not detect any  role of $\psi$   in  the description of the   peak of probability distribution of proportions   for $l=3$. We hope to investigate in more details  the  typical behavior of  the most frequent transitions of  self-aggregations   for $l\ge3$  elsewhere.
           \begin{figure}[h!]
\centering
\begin{tikzpicture}[node distance= 2.1cm]
 \tikzstyle{every node}=[font=\tiny]
\node(A)                           {$[10,3,1]$};
\node(B1)      [right of=A]       {$[10,10,4]$ };
\node(B2)      [right of=B1]       {$[20,10,10]$ };
\node(C1)      [right  of=B2]       {$[20,20,20]$ };
\node(D3)      [ right of=C1]   {($171$ result) };
\draw(A)       edge [ above]  node {0.1225} (B1);
\draw(B1)      edge [ above]  node {0.0438} (B2);
\draw(B2)      edge [ above]  node {0.0394} (C1);
\draw(C1)       -- (D3);  
\end{tikzpicture}
\caption{ A sequence of highest probability weight self-aggregation  results that starts with   $\lambda=[10,3,1]$.}
\label{Fig11}
\end{figure}

          \begin{figure}[h!]%
\centering
\begin{tikzpicture}[node distance= 2.3cm]
 \tikzstyle{every node}=[font=\tiny]
\node(A)                           {$[3,1,1]$};
\node(B1)      [right of=A]       {$[4,3,2]$ };
\node(C1)      [right  of=B1]       {$[6,5,4]$ };
\node(D1)      [below right of=C1]  {$[10,9,6]$};
\node(D2)      [above right of=C1]  {$[10,9,8]$};
\node(E1)      [below right of=D2]  {$[18,17,16]$};
\node(F1)      [right of=E1]  {$\dots$};
\node(E2)      [above right of=D2]  {$[18,17,10]$};
\node(F2)      [right of=E2]  {$\dots$};
\node(E3)      [ right of=D1]  {$[16,15,10]$};
\node(F3)      [right of=E3]  {$\dots$};
\draw(A)     edge [ above]  node {0.2526} (B1);
\draw(B1)   edge [ above]  node {0.1308} (C1);
 \draw (C1) edge [ left ]  node {0.0401}  (D1);
\draw(C1)   edge [ left]  node {0.0401} (D2);
\draw(E1)   edge [ left]  node {0.0124} (D2);
\draw(E2)     edge [left]  node {0.0124}  (D2);
\draw(E3)     edge [ above]  node {0.0145}  (D1);
\end{tikzpicture}
\caption{ A sequence of highest probability weight self-aggregation  results that starts with   $\lambda=[3,1,1]$.}
\label{Fig12}
\end{figure}
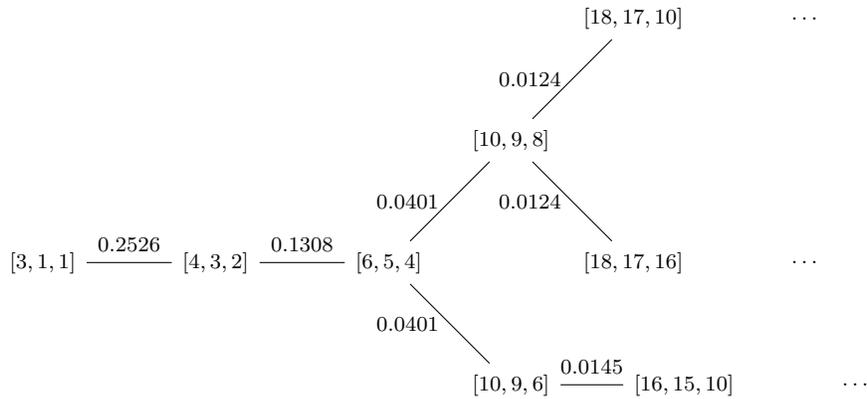

         \begin{figure}[h!]%
\centering
\begin{tikzpicture}[node distance= 2.3cm]
 \tikzstyle{every node}=[font=\tiny]
\node(A)                           {$[7,5,3]$};
\node(B1)      [right of=A]       {$[10,8,7]$ };
\node(C1)      [right  of=B1]       {$[17,15,10]$ };
\node(D1)      [ right of=C1]  {$[32,17,15]$};
\node(F1)      [ right of=D1]  {$[32,32,32]$};
\node(G3)      [ right of=F1]        {($465$ results)};
\draw(A)     edge [ above]  node {0.0546}(B1);
\draw(B1)   edge [ above]  node {0.0137}(C1);
\draw(C1)    edge [ above]  node {0.0077} (D1);
\draw(D1)     edge [ above]  node {0.0182} (F1);
\draw(F1)       edge (G3);
\end{tikzpicture}
\caption{ A sequence of highest probability weight self-aggregation  results that starts with   $\lambda=[7,5,3]$.}
\label{Fig13}
\end{figure}

\subsection {Conclusions}
We defined  two  lattice growth models, where clusters are represented by  rectangular boxes, that either move without rotations (aggregation of boxes), or can rotate  before an attachment (aggregation of partitions).  Under assumption of  equal probability of attachments   at any lattice  point of the boundary of the box, we described the set of all possible aggregations and its local statistical characteristics. In particular, we derived the  generating function of the moments of  the random variable of the number of directions of the growth of   an $l$-dimensional box. The combinatorics of these calculations is given by elementary symmetric functions. We also observe that Markov chain of aggregations  with a unit box provides a recurrence relation of  multidimensional  Delannoy numbers with non-constant weights. 

 In 2-dimensional case we  obtained a nice  evidence in support of   the mathematical  interpretation \cite{Sor1, Sor2}  of the peak value  point of the  distribution of proportions of clusters in   DLCA model. 
 We proved that   the most frequent transitions in  self-aggregations of rectangles  stabilize to side-to-end pattern, as in RHM model, and for this reason their  proportions  grow by Fibonacci-like rule  with the limit value $\varphi\simeq 1.618,$.  We observed that  transitional probability  of this sequence  tends to $\frac{1}{2}-\frac{1}{\varphi+1}\simeq 0.12$, while transitional  probabilities  of  all other results diminish to zero. 
 
 In dimensions $l\ge 3$ the approach of tracing self-aggregations  with the highest probability weight does  not provide  immediate evidence for  proposed in \cite{Sor1, Sor2}   interpretation of the peak value of proportions of aggregated clusters. Still,  the behavior of sequences of   the most frequent transitions of  self-aggregations  remains an interesting problem that we plan to investigate further.

\end{document}